\numberwithin{equation}{section}
\newtheorem{theorem}{Theorem}[section]
\newtheorem{lemma}[theorem]{Lemma}
\newtheorem{proposition}[theorem]{Proposition}
\newtheorem{definition}[theorem]{Definition}
\newtheorem{remark}[theorem]{Remark}
\newenvironment{Proof}[1][.]%
{\begin{trivlist}\item[]\textbf{Proof#1 }}%
{\qed\end{trivlist}}
\newcommand{\R}{\mathbb{R}}
\newcommand{\Rmnum}[1]{\uppercase\expandafter{\romannumeral #1\relax}}
\newcommand{\caO}{\mathcal{O}}
\newcommand{\rmO}{\mathrm{O}}
\newcommand{\rmd}{\mathrm{d}}
\newcommand{\rme}{\mathrm{e}}
\renewcommand{\Re}{\mathrm{Re}}
\renewcommand{\leq}{\leqslant}
\renewcommand{\geq}{\geqslant}
\def\eps{\varepsilon}
\font\tenbi=cmmib10 scaled \magstep1 \font\sevenbi=cmmib10 at 11pt
\font\fivebi=cmmib10 at 6pt \textfont\bifam = \tenbi
\title{Pitchfork bifurcation along a slow parameter ramp: coherent structures in the critical scaling}
\author{
Ryan Goh\thanks{Department of Mathematics and Statistics, Boston University, 665 Commonwealth Ave., Boston,  MA 02215, USA; \texttt{rgoh@bu.edu}.},
 Tasso J. Kaper\thanks{Department of Mathematics and Statistics, Boston University, 665 Commonwealth Ave., Boston,  MA 02215, USA},
  Arnd Scheel\thanks{School of Mathematics, University of Minnesota, 206 Church St. SE, Minneapolis,  MN 55455, USA.}, 
}
\begin{document}

%%%%%%%%%%%%%%%%%%%%%%%%%%%%%%%%%%%%%%%%%%%%%%%%%%%%%%
% Title Page
%%%%%%%%%%%%%%%%%%%%%%%%%%%%%%%%%%%%%%%%%%%%%%%%%%%%%%

% \begin{center}
% {\fontsize{15}{15}\fontfamily{cmr}\fontseries{b}\selectfont{Pitchfork bifurcation along a slow parameter ramp: coherent structures in the critical scaling
% }}\\[0.2in]
% Ryan Goh, Tasso J. Kaper, and Arnd Scheel
% \end{center}

\maketitle

\begin{abstract}\noindent 
We investigate the slow passage through a pitchfork bifurcation in a spatially extended system, when the onset of instability is slowly varying in space. We focus here on the critical parameter scaling, when the instability locus propagates with speed $c\sim \eps^{1/3}$, where $\eps$ is a small parameter that measures the gradient of the parameter ramp. Our results establish how the instability is mediated by a front traveling with the speed of the parameter ramp, and demonstrate scalings for a delay or advance of the instability relative to the bifurcation locus depending on the sign of $c$, that is on the direction of propagation of the parameter ramp through the  pitchfork bifurcation. The results also include a generalization of the classical Hastings-McLeod solution of the Painlev\'e--II equation to Painlev\'e-II equations with a drift term. 
\end{abstract}
% \vfill

%  \hrule
%  {\small
%  \begin{Acknowledgment} 
 
%  \end{Acknowledgment}
% 
% {\bf Running head:} {Grain boundaries in the Swift-Hohenberg equation}
% 
% {\bf Corresponding author:} 
% 

{\bf Keywords:} 
invasion front, 
slow parameter ramp, 
critical quench speed, 
dynamic pitchfork bifurcation, 
bifurcation delay, 
diffusive front spillover, 
Painlev\'e-II equation with drift,
geometric desingularization

{\bf MSC:} 
34E13, 35B25, 35B32, 35B36, 34C08

 %\newpage

%%%%%%%%%%%%%%%%%%%%%%%%%%%%%%%%%%%%%%%%%%%%%%%%%%%%%%
\section{Introduction}
%%%%%%%%%%%%%%%%%%%%%%%%%%%%%%%%%%%%%%%%%%%%%%%%%%%%%%

Directional quenching mechanisms have proven to be useful tools in mediating and controlling the formation of coherent structures in various types of physical systems. Here, some sort of external mechanism travels across the medium, progressively rendering it unstable, and subsequently a selected front or patterned state invades the unstable state. Thus, by controlling the quenching process, one hopes to control the specific pattern that is formed, and to suppress the common defect formation observed when small fluctuations excite a homogeneous unstable state; see \cite{Goh_2023} for a general review. This work is motivated by quenching processes which vary slowly in space. Such quenches have found relevance in fluid dynamics \cite{chomaz99,huerre1990,riecke1986pattern,riecke1987perfect}, biology \cite{HISCOCK2015408}, and more generally in nonlinear systems \cite{haberman79,hunt91,kramer82,kuske,maree96}. When viewed as a spatial dynamics system, slowly-ramped quenches lead to dynamic bifurcations or ``slow passage" problems.

Motivated by these phenomena, we are interested in slow passage through a pitchfork bifurcation in a spatially extended system 
\begin{equation}
 u_t=u_{xx}+cu_x + \mu u - u^3.
\end{equation}
% We think of the pitchfork as driven by a slowly varying parameter $\mu=\mu(\eps(x-ct))$, with $\mu(\xi)\xi<0$, $\xi=x-ct$, $0< \eps \ll 1$, and spatial coupling through diffusion. This scenario was recently analyzed in the case of $c$ fixed with $0<c<2$ and in the case of $c=0$, \cite{gksv}. There, it was shown, for the specific equation with $\mu(x)=-\tanh(\eps x)$,
We think of the pitchfork as driven by a slowly varying parameter $\mu=\mu(\eps x)$, with $\mu(\eps x)x<0$, $\,0< \eps \ll 1$, and spatial coupling through diffusion and drift with speed $c$. In the lab frame, this corresponds to a parameter ramp moving with speed $c$. This scenario was recently analyzed in the case of $c$ fixed with $0<c<2$ and in the case of $c=0$, \cite{gksv}. There, it was shown that, for the specific equation with $\mu(\eps x)=-\tanh(\eps x)$,
\begin{equation}\label{e:actanh}
 u_t=u_{xx}+cu_x -\tanh(\eps x) u - u^3,
\end{equation}
and for small enough $\eps$,
the system supports stable fronts $u_*(x)$ with $u_*(x)\to 0$ for $x \to\infty$ and $u_*(x)\to 1$ for $x \to -\infty$. %RG: Note we were already in the co-moving frame in eqn 1.1 so the introduction of the \xi variable didn't make sense. 

Interestingly, for each $0<c<2$ fixed, the front exhibits a spatio-temporal delay of instability:
\begin{align} \label{e:mu_fr}
u_*(x)&\sim 0 \ \ {\rm when} \ \ \mu(\varepsilon x)<c^2/4+ a_1 \varepsilon^{2/3} 
+\rmO(\eps\ln\eps), \nonumber \\
u_*(x)&\sim \sqrt{\mu(\varepsilon x)} \ \  {\rm when} \ \ \mu(\varepsilon x)>c^2/4+ a_1 \eps^{2/3}+\rmO(\eps\ln\eps),
\end{align}
where $a_1 = \Omega_0\left( 1 - {c^4/16}\right)^{2/3}$ and $\Omega_0$ is the smallest positive zero of the following linear combination of Bessel functions of the first kind, $J_{-1/3} (2 z^{3/2}/3) + J_{1/3}(2z^{3/2}/3)$; see Theorem 1.1 of \cite{gksv}. Therefore, in the case of $0<c<2$ fixed, there is an $\rmO(\varepsilon^{-1})$ region in space where the stable front is close to an unstable state, since the trivial state destabilizes at $\mu=0$ but the solution stays near the trivial state until just beyond $\mu=c^2/4$. It turns out that in most of this region, up to $\mu=c^2/4$, this instability is only convective, justifying much of this large delay, yet leaving still an $\rmO(\eps^{-1/3})$ wide region where the front is near an absolutely unstable state. 

On the other hand, in the case $c=0$, there is no such delay. Instead, the front interface exhibits a diffusive spillover of the state with $u>0$ into the stable region $x>0$, as shown in Theorem 1.2 of \cite{gksv}.

This article presents the more delicate asymptotic analysis in the transition regime in parameter space to connect these two regimes and to describe the transition from a spill-over of the instability to a delay of instability. We therefore focus on the situation where $c\sim 0$ is small, yet allow $c<0$ for a more complete understanding. Keys to the analysis of the case $c=0$ are first a good understanding of an inner expansion and second an intricate matching of this inner expansion with the outer solution. The latter is accomplished using geometric desingularization and heteroclinic gluing methods. In this work, we focus on the former part for $c\sim0$.

We are interested in the case where a transition from $u\sim 0$ to $u\sim \sqrt{\mu}$ happens in a region near the origin, where $|\mu|\ll 1$ and hence $\tanh(\eps x)\sim \eps x$. Therefore, we are interested in 
\begin{equation*}
 u_t=u_{xx}+cu_x -\eps x u - u^3.
\end{equation*}
This equation possesses a natural scaling
\begin{equation}\label{e:scale}
 u=\eps^{1/3}\tilde{u}, \qquad x=\eps^{-1/3}\tilde{x},\qquad c=\eps^{1/3}\tilde{c}, \qquad  t=\eps^{-2/3}\tilde{t},
\end{equation}
which leads to
\begin{equation*}
 \tilde u_{\tilde t}=\tilde u_{\tilde x\tilde x}+\tilde c \tilde u_{\tilde x} - \tilde x \tilde u - \tilde u^3.
\end{equation*}
In fact, we showed in \cite{gksv} that the cases $c\gg \eps^{1/3}$ ($\tilde{c} \gg 1$) and $c\ll \eps^{1/3}$ ($\tilde{c}\ll 1$) can be understood as small perturbations of the cases $c=\rmO(1)$ and $c=0$, respectively. For notational simplicity, we drop the tildes throughout Sections \ref{s:asy} - \ref{s:5}, considering the equation
\begin{equation}\label{e:aclin}
 u_t=u_{xx}+cu_x - x u - u^3.
\end{equation}

This work focuses on existence, uniqueness, monotonicity, quantitative asymptotics, and qualitative properties of stationary solutions to \eqref{e:aclin}.
Our first main result is the following:

\begin{theorem}[Existence and Uniqueness of Quenched Fronts]\label{t:1}
For any $c\in\R$, the equation \eqref{e:aclin} has a unique monotonically decreasing stationary solution $u_*(x;c)$ with the properties that
\begin{equation}\label{e:lim}
\lim_{x\to -\infty} \left(u_*(x;c)-\sqrt{-x}\right)=0, \qquad \lim_{x\to\infty}u_*(x;c)=0.
\end{equation}
\end{theorem}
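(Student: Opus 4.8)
The plan is to view the stationary problem as the second–order ODE
\begin{equation*}
u'' + c\,u' - x\,u - u^3 = 0, \qquad x\in\R, \tag{$\ast$}
\end{equation*}
and to construct $u_*(\cdot\,;c)$ as a connecting orbit between the slow branch $u\sim\sqrt{-x}$ at $x=-\infty$ and the trivial state $u\equiv 0$ at $x=+\infty$. I would obtain existence by the method of ordered sub- and supersolutions on truncated intervals $[-L,L]$ followed by a compactness argument, and then establish monotonicity and uniqueness by maximum–principle arguments applied, respectively, to $p=u_*'$ and to the difference of two solutions.

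For existence, first fix the barriers. A natural supersolution is obtained by gluing the outer branch $\sqrt{-x}$ on the left to a suitably scaled, decaying Airy–type solution of the linearization $u''+cu'-xu=0$ on the right; since $-u^3\le 0$, any nonnegative supersolution of the linear problem is a supersolution of $(\ast)$, and a direct computation gives, for $x<0$,
\begin{equation*}
\bigl(\sqrt{-x}\bigr)''+c\bigl(\sqrt{-x}\bigr)'-x\sqrt{-x}-\bigl(\sqrt{-x}\bigr)^3=-\tfrac14(-x)^{-3/2}-\tfrac{c}{2}(-x)^{-1/2},
\end{equation*}
so that $\sqrt{-x}$ is a genuine supersolution on the left precisely when $c\ge 0$. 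For $c<0$ the drift term spoils this sign, and I would replace $\sqrt{-x}$ by a shifted barrier $\sqrt{\gamma-x}$ with $\gamma>0$ chosen large enough that the negative term $-\gamma(\gamma-x)^{1/2}$ dominates $-\tfrac{c}{2}(\gamma-x)^{-1/2}$ on the relevant range, patching near the turning point and orienting the kink so as to preserve the supersolution inequality. A lower barrier is furnished by $\underline u\equiv 0$ (an exact solution, hence a subsolution), which already forces $u\ge 0$; to pin the correct growth at $-\infty$ I would use in addition a shifted square–root $\sqrt{(-x-\eta)_+}$ on the far left, whose corner contributes a positive distributional second derivative, the sign favorable to a subsolution. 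With $0\le\underline u\le\bar u$ and boundary data $u(-L)=\sqrt{L}$, $u(L)=\bar u(L)$ ordered between them, monotone iteration produces solutions $u_L$ of $(\ast)$ on $[-L,L]$ obeying the same two–sided bound. Uniform interior $C^2_{\mathrm{loc}}$ estimates and the Arzel\`a--Ascoli theorem then yield, along a subsequence, a solution $u_*$ on all of $\R$; the squeeze between the barriers delivers exactly the asymptotics in \eqref{e:lim}.

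Monotonicity would follow by differentiating $(\ast)$: the function $p=u_*'$ solves the linear equation
\begin{equation*}
p'' + c\,p' - \bigl(x+3u_*^2\bigr)\,p = u_*\ \ge 0,
\end{equation*}
with $p\to 0$ at both ends. Wherever $x+3u_*^2>0$, the zeroth–order coefficient has the good sign, and the maximum principle forbids an interior positive maximum of $p$ there; since this region is the complement of a bounded set and $p\to0$ at infinity, $p\le 0$ follows once the sign condition is propagated across the bounded transition region. The same linear structure drives uniqueness: if $u_1,u_2$ are two monotone solutions with the asymptotics \eqref{e:lim}, then $w=u_1-u_2\to 0$ at both infinities and satisfies
\begin{equation*}
w''+c\,w'-q(x)\,w=0,\qquad q(x)=x+u_1^2+u_1u_2+u_2^2,
\end{equation*}
so that either the maximum principle (if $q>0$) or the weighted energy identity $\int_\R e^{cx}\bigl((w')^2+q\,w^2\bigr)\,\rmd x=0$ forces $w\equiv 0$.

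The main obstacle in the last two steps is the same: the effective potential $x+3u_*^2$ (respectively $q$) is manifestly positive for $x\ge 0$ and, because $u_*^2\sim -x$, also as $x\to-\infty$, but it is not controlled a priori on the bounded transition region near the origin, where the crude barrier bound gives only $u_*^2\le -x$. Closing the argument therefore requires the quantitative lower bounds on $u_*$ established elsewhere in the paper, which keep $u_*$ close enough to the branch $\sqrt{-x}$ to guarantee $x+3u_*^2>0$ throughout. This sign control, together with the construction of admissible barriers in the delicate case $c<0$, is where I expect the real work to lie.
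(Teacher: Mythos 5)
Your route (barriers on $[-L,L]$ plus compactness for existence, then maximum principles for monotonicity and uniqueness) is genuinely different from the paper's, which instead constructs unique admissible solutions only in the limits $c\ll-1$ and $c\gg1$ by geometric singular perturbation theory and then continues in $c$, using invertibility of $\mathcal{L}_c$ (proved spectrally) for openness and a compactness-plus-local-degeneracy argument for closedness. The existence half of your plan is plausible in outline, though the subsolution $\sqrt{(-x-\eta)_+}$ needs repair: near $x=-\eta$ the residual $\eta\sqrt{-x-\eta}-\tfrac14(-x-\eta)^{-3/2}-\tfrac{c}{2}(-x-\eta)^{-1/2}$ has the wrong sign and the left derivative is $-\infty$ at the corner, so you must cut off or rescale before the turning point.

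The genuine gap is in your monotonicity and uniqueness steps, and it is not repairable by ``quantitative lower bounds on $u_*$ established elsewhere in the paper'': no such bounds exist there, and the sign condition you need is in fact \emph{false}. Your arguments require $x+3u_*^2\ge 0$ (respectively $q=x+u_1^2+u_1u_2+u_2^2\ge 0$) on the transition region, i.e.\ $u_*\ge\sqrt{-x/3}$ for $x<0$. But by Proposition~\ref{p:1}~(C), for large $c>0$ the front interface sits near $x=-c^2/4$ and $u_*$ is exponentially small on the entire delay region $-c^2/4+\tilde\delta<x<0$, so $x+3u_*^2\approx x<0$ on an interval of length $\sim c^2/4$; a positive interior maximum of $p=u_*'$, or a nontrivial sign-definite $w=u_1-u_2$, is not excluded there by the maximum principle or by the weighted energy identity. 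This is precisely why the paper does not argue pointwise: negativity of $\mathcal{L}_c$ is obtained in Lemma~\ref{p:lc} from positivity of the Schr\"odinger ground state paired against the identity $\mathcal{L}_c\partial_xu_0=u_0$ (not from positivity of the potential), monotonicity in $x$ is built into the definition of admissibility and preserved under continuation via the second-order analysis at a putative degenerate critical point in Lemma~\ref{l:c}, and uniqueness is deduced globally from uniqueness at $|c|\gg1$ together with the open/closed branch structure. To close your program you would need a substitute for the pointwise sign condition — for instance the spectral negativity of the linearization or a sliding/Sturm-type comparison — and as written that ingredient is missing.
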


We also derive asymptotics and  qualitative properties for the solutions $u_*(x:c)$, as follows. 
\begin{definition}
A stationary solution $u(x;c)$ of \eqref{e:aclin} is said to be an \emph{admissible solution} if $u$ has limits as in \eqref{e:lim} and $\partial_xu(x;c)<0$ for all $x$.
\end{definition}
Let $u_0(x;c)$ denote an admissble solution.
Define the operator
\begin{equation}\label{e:opL}
 \mathcal{L}_c u:=u_{xx}+cu_x-x u -3u_0^2 u.
\end{equation}
We consider $\mathcal{L}_c$ both as a closed operator on $L^2(\R)$, but also on other function spaces, and simply as applied in a pointwise sense later on. The qualitative information is summarized in the following proposition.

\begin{proposition}[Qualitative Properties of Quenched Fronts]\label{p:1}
The solution $u_*$ of Theorem~\ref{t:1} has the following properties:
\begin{enumerate}
 \item[(A)] \emph{transversality and stability:} $u_*$ is transverse for all $c$ in the sense that $\mathcal{L}_c$ has a bounded inverse on $L^2$; in fact, we have $\Re\,\left(\mathrm{spec}\,\mathcal{L}_c\right)<0$ when $\mathcal{L}_c$ is considered as a closed, densely defined operator on $L^2$;
 \item[(B)] \emph{monotonicity in $c$:} %$\partial_xu_*(x;c)<0$, 
 $\partial_c u_*(x;c)<0$ for all $x,c\in\R$;
 \item[(C)] \emph{fast quench:} For sufficiently large $c>0$, the front position, $x_\delta(c)=\sup\{x \,: \, u_*(x;c) > \delta  \}$ satisfies
\begin{equation}
\left|x_\delta(c) - \left(-c^2/4 - \Omega_0 \left(\frac{15}{16}\right)^{2/3}\right)\right| \leq K c^{-1},
\end{equation}
for some constant $K>0$ independent of $c$ and some fixed $\delta >0$ small and independent of $c$.
 
Furthermore, for fixed $\tilde{\delta}>0$, for any continuous function $x(c)$
 \begin{equation}
\lim_{c\to\infty}u_*(x(c);c)= 0 \text{ when } x(c)>-c^2/4+\tilde{\delta},
  \end{equation}
and
\begin{equation}
 \lim_{c\to\infty}(u_*(x(c);c)-\sqrt{-x(c)})= 0 \text{ when }x(c)<-c^2/4-\tilde{\delta};
\end{equation}

\item[(D)] \emph{fast reverse quench:} For fixed $\delta>0$ small, let $x_\delta(c) = \sup\{x \,: \, u_*(x;c) > \delta  \} $. Then 
\begin{equation}
\lim_{c\rightarrow -\infty} x_\delta(c) - \sqrt{-c} = 0.
\end{equation}

\item[(E)] For $c \ge 0$, $u_*(x;c)$ intersects the curve $\sqrt{-x}$ in a unique point on $(-\infty,0)$.

%\item\emph{fast reverse quench:}
% for fixed $\delta>0$, we have
% \[\lim_{c\to-\infty}u_*(x;c)= 0 \text{ for any fixed } x>\delta,
% \]
% however, on $x<-\delta$, the asymptotics of $u_*$ are non-uniform in $x$; in 
% particular, let $x_c=-\sqrt{-c}$
% \begin{equation*} 
% \begin{split}
% u_*(x;c) \sim 0 \text{ for } x_c \ll x < 0 \\
% u_*(x;c) \sim \sqrt{-x} \text{ for } x \ll x_c.
% \end{split}
% \end{equation*}
%
\end{enumerate}
\end{proposition}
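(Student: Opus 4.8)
The plan is to convert the geometric statement into a zero-counting problem for a cleverly chosen auxiliary quantity. Since $u_*>0$ everywhere (it is strictly decreasing with limit $0$ only at $+\infty$), the graph of $u_*$ meets the curve $\sqrt{-x}$ on $(-\infty,0)$ exactly where $u_*(x)^2=-x$, i.e. at the zeros of
\begin{equation}\label{e:vdef}
 v(x):=u_*(x)^2+x.
\end{equation}
So I would count zeros of $v$. Differentiating \eqref{e:vdef} twice, substituting $u_*''=-cu_*'+xu_*+u_*^3$ from \eqref{e:aclin}, and using $2u_*u_*'=v'-1$ together with $2xu_*^2+2u_*^4=2u_*^2v$, one obtains the identity
\begin{equation}\label{e:vode}
 v''+cv'-2u_*^2\,v = 2(u_*')^2+c .
\end{equation}
This is the crux: for $c\ge0$ the right-hand side is strictly positive, since the strict monotonicity $u_*'<0$ gives $2(u_*')^2>0$. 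Thus $v$ solves a linear equation $v''+cv'-p\,v=g$ with $p=2u_*^2\ge0$ and $g>0$.

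The key consequence is a one-sided maximum principle. At any critical point $x_0$, equation \eqref{e:vode} reads $v''(x_0)=2u_*(x_0)^2v(x_0)+2(u_*'(x_0))^2+c$; hence $v(x_0)\ge0$ forces $v''(x_0)>0$, so $x_0$ is a strict local minimum. Therefore every local maximum of $v$ lies strictly below zero, and in particular no bounded component of $\{v>0\}$ can exist, since on such a component $(\alpha,\beta)$ with $v(\alpha)=v(\beta)=0$ the function $v$ would attain a positive interior maximum. For the boundary behaviour I note $v(0^-)=u_*(0)^2>0$, while \eqref{e:lim} gives $v\to0$ at the left end. To fix the sign there I would use the refined asymptotics $u_*=\sqrt{-x}+\phi$: linearizing \eqref{e:aclin} about $\sqrt{-x}$ yields $\phi''+c\phi'+2x\phi=-r+\rmO(\phi^2)$ with $r=-\tfrac14(-x)^{-3/2}-\tfrac c2(-x)^{-1/2}$, and the dominant balance $2x\phi\approx-r$ gives $\phi\approx-\tfrac18(-x)^{-5/2}-\tfrac c4(-x)^{-3/2}<0$ for $c\ge0$, so $v=2\sqrt{-x}\,\phi+\phi^2<0$ for $x\ll0$, i.e. $v\to0^-$. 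Combined with $v(0^-)>0$, the intermediate value theorem furnishes at least one zero.

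For uniqueness I would argue as follows. Because $v<0$ for $x\ll0$, no component of $\{v>0\}$ reaches $-\infty$, and by the maximum principle none is bounded; since $v(0^-)>0$, the set $\{v>0\}$ must be the single interval $(\bar x,0)$, where $\bar x$ is a zero of $v$. Consequently $v\le0$ on $(-\infty,\bar x]$, so any further zero $x_1<\bar x$ would be a local maximum of $v$ with value $0$; but there $v'(x_1)=0$ and \eqref{e:vode} forces $v''(x_1)=2(u_*'(x_1))^2+c>0$, contradicting $v''(x_1)\le0$. Hence $\bar x$ is the unique zero, i.e. $u_*$ meets $\sqrt{-x}$ in exactly one point of $(-\infty,0)$.

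I expect the main obstacle to be the rigorous control of the left-end sign $v\to0^-$: the formal dominant balance above must be upgraded to a genuine estimate, either by constructing sub/supersolutions near $-\infty$ or by invoking the quantitative asymptotics established elsewhere in the paper. This is also exactly where the hypothesis $c\ge0$ is used twice—for the positivity of the right-hand side of \eqref{e:vode} and for the sign of $\phi$, both of which can fail when $c<0$—which is consistent with statement~(E) being restricted to $c\ge0$.
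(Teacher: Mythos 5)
Your proposal addresses only item (E) of the proposition. Items (A)--(D) are substantive claims in their own right, and the paper proves them by entirely separate arguments: (A) is Lemma~\ref{p:lc} (conjugation to a Schr\"odinger operator, positivity of the ground state, and a relative-compactness argument to transfer the spectrum back to $L^2$); (B) is Lemma~\ref{l:m} (resolvent positivity applied to $\mathcal{L}_c\partial_c u=-u_x$); (C) and (D) come from the slow--fast/geometric-desingularization constructions of Sections~\ref{s:3} and \ref{s:4} (Lemmas~\ref{l:cllm1} and \ref{l:cggp1}), which yield the quantitative front locations $x_\delta(c)\sim -c^2/4-\Omega_0(15/16)^{2/3}$ and $x_\delta(c)\sim\sqrt{-c}$. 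None of this is touched by your argument, so as a proof of the full proposition the proposal has a large gap.

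For item (E) itself, your argument is essentially correct and takes a genuinely different route from the paper's Lemma~\ref{l:cross}. The paper argues by continuation in $c$: the unique transverse crossing is known for $c=0$ (Hastings--McLeod) and for $c\gg1$, and the computation $v''(x_1)=\tfrac{1}{4}(-x_1)^{-3/2}+\tfrac{c_1}{2}(-x_1)^{-1/2}>0$ at a putative tangency rules out any change in the number of intersections as $c$ varies. Your approach instead works at a single fixed $c\ge0$: the identity $v''+cv'-2u_*^2v=2(u_*')^2+c$ for $v=u_*^2+x$ is correct (I checked it), and the resulting one-sided maximum principle cleanly forbids both bounded components of $\{v>0\}$ and touching zeros from below. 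This buys independence from the Hastings--McLeod result and from the continuation machinery, at the price of needing the endpoint signs. The sign $v(0)>0$ is immediate; the sign $v\to0^-$ at $-\infty$ is, as you say, the delicate point. It does follow from the unstable-manifold asymptotics: the exponentially small terms in Lemmas~\ref{l:asym} and \ref{l:asymc} are negligible against the algebraic correction, and the dominant balance in the proof of Lemma~\ref{l:asymc} (namely $\sqrt{2}\,\bar u+\tfrac{c}{2\sqrt2}\beta^4=0$, i.e.\ $\bar u=-\tfrac{c}{4}(-x)^{-2}$) gives exactly your $\phi\approx-\tfrac{c}{4}(-x)^{-3/2}<0$ for $c>0$; note that the coefficient displayed in \eqref{e:asym-cnotzero} has the opposite sign, so you should cite the derivation rather than the displayed formula, or rederive the correction as you do. With that endpoint estimate made rigorous, your proof of (E) is complete and arguably more self-contained than the paper's; but you still owe proofs of (A)--(D).
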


%We give a somewhat more precise description of the asymptotics in the proofs.

Figure \ref{f:num} gives numerical results supporting these statements, plotting admissible solutions $u_*(x;c)$ of \eqref{e:aclin} for a range of the scaled $c$ values. For $c>0$, we observe that the front interface locus $x_\delta(c)$ decreases proportional to $-c^2/4$ as $c$ increases. Then, for $c<0$, we observe that as $c$ decreases, the tail of the front spills over into $x>0$, so $x_\delta(c)$ increases, while the region where $u\sim \sqrt{-x}$ recedes to the left.  See Section \ref{s:num} for more description of the numerical methods used to obtain these plots.
\begin{figure}[ht]
 \centering
 \hspace{-0.2in}\includegraphics[width=0.85\textwidth]{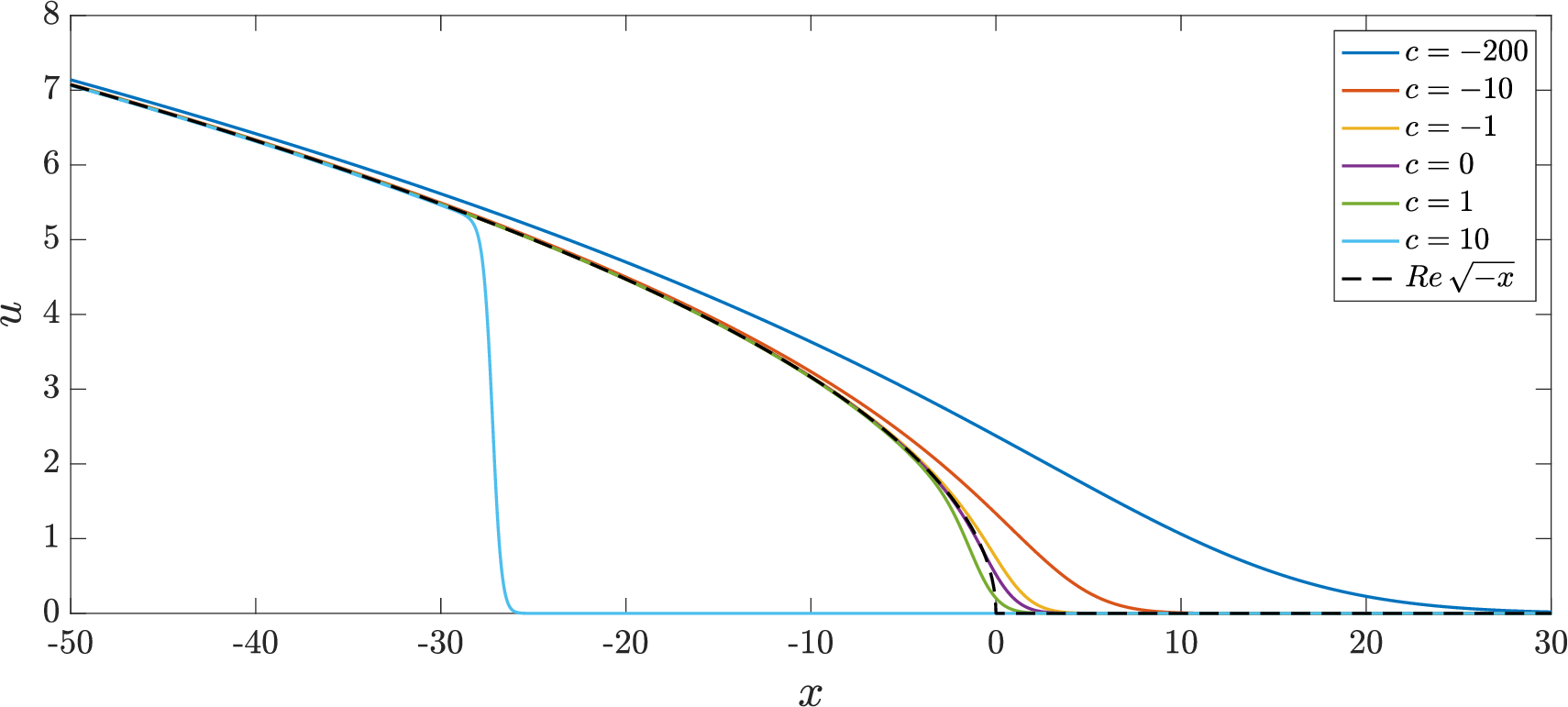}\hspace{-0.15cm}\\
   \hspace{-0.2in}\includegraphics[width=0.33\textwidth]{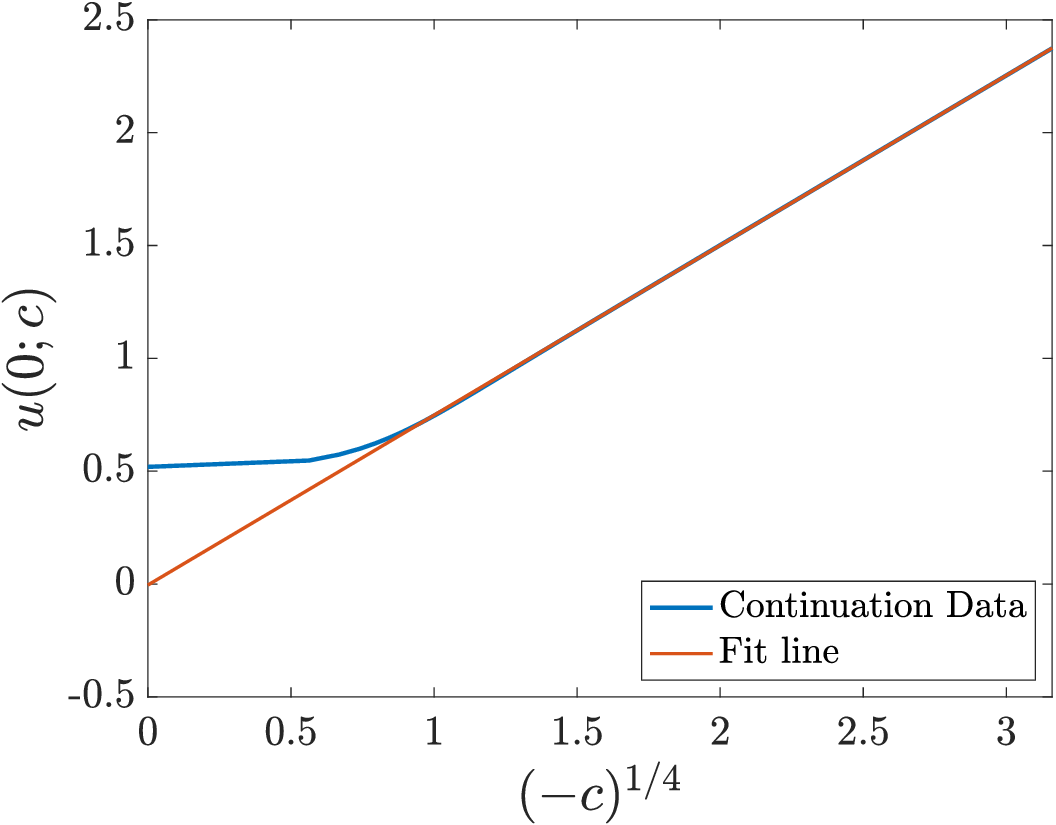}\hspace{-0.0cm}
  \centering \includegraphics[width=0.33\textwidth]{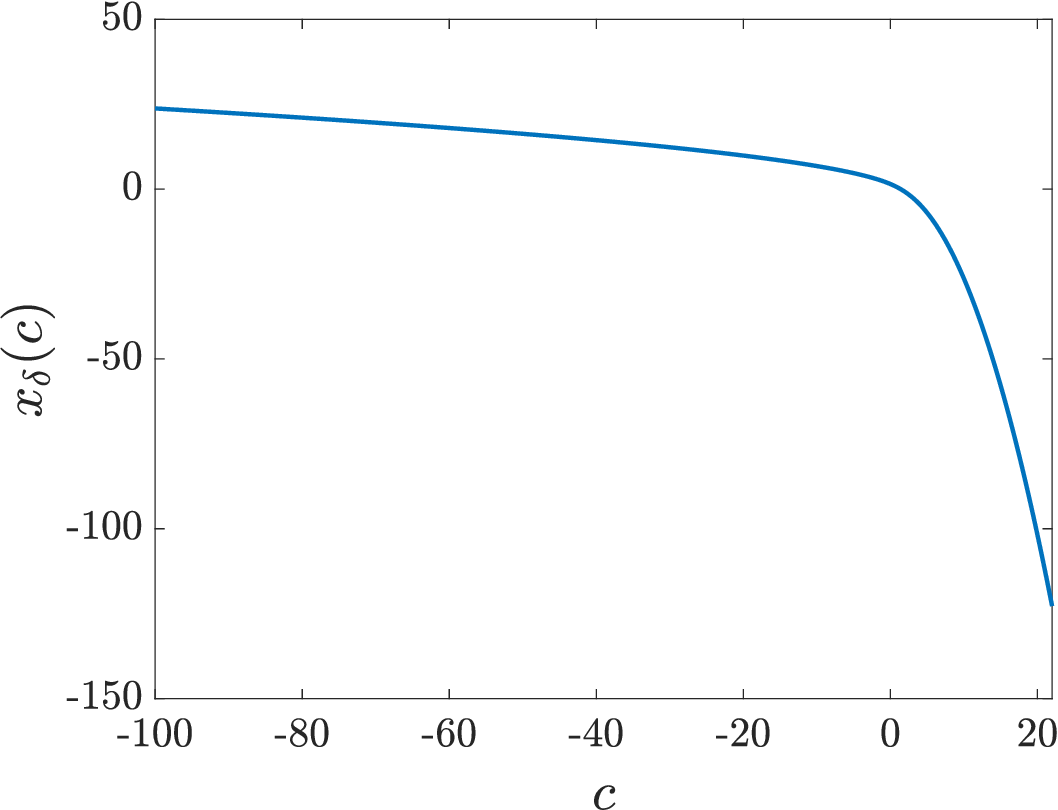}\hspace{-0.1cm}
   \centering \includegraphics[width=0.33\textwidth]{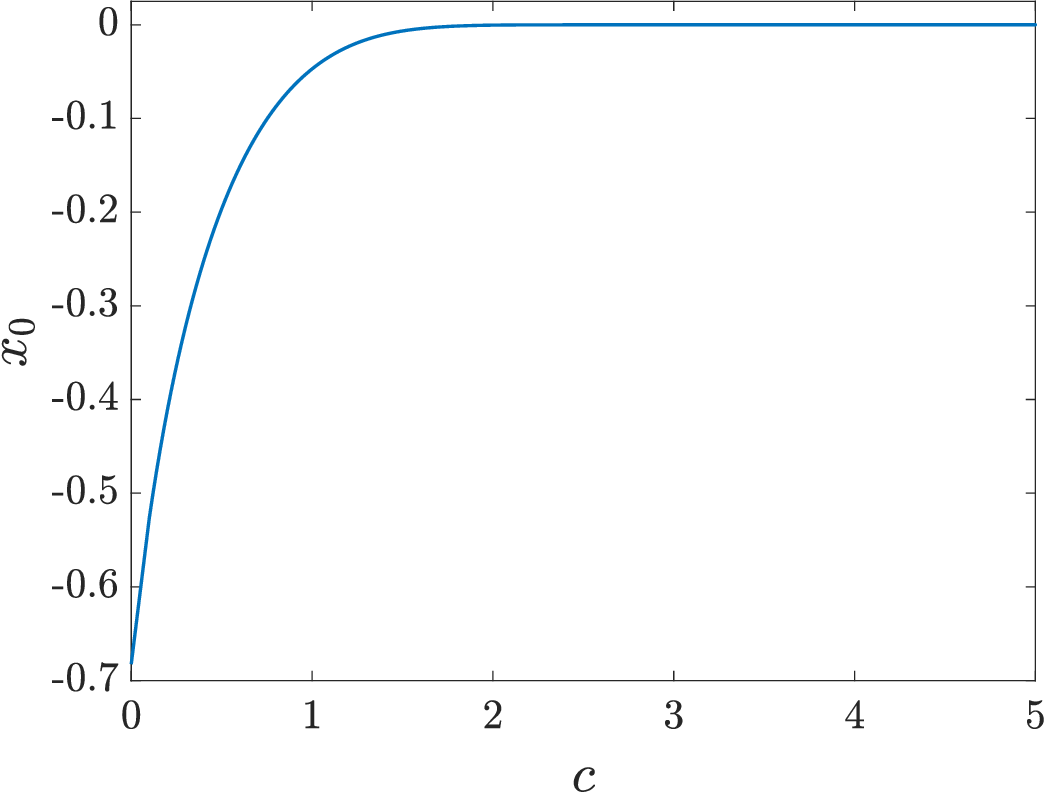}\hspace{-0.2cm}
  \caption{Top: Sample profiles for $c=-200,-10,1,0,1,10$ 
   of admissible solutions $u(x;c)$ of \eqref{e:aclin} computed using AUTO07p. 
This illustrates the transition from fronts with diffusive spill-over (for $c\le 0$) to fronts which exhibit significant delays in the onset of the instability past the pitchfork bifurcation point (for $c>0$). %{\color{magenta} (RG: Is this next sentence true? I don't think so, as there is no $\eps$ in the equation any more)} For example, with $c=10$, the front is located just ($\mathcal{O}(\varepsilon^{2/3})$) to the left of $\frac{c^2}{4}=25$.
  Bottom left: The value $u(0;c)$ plotted as a function of $(-c)^{1/4}$ for $c<0$ (blue) along with linear fit (orange), of data for large $c$ values. Here, the slope of the fit line was found to be 0.7527, within 0.0016 of the predicted $\pi^{-1/4}$; see \eqref{e:u(0)-cnegative}. Bottom center: Value of the invasion point $x_\delta(c)$ defined by $u(x_\delta(c);c)=\delta$ with $\delta = 0.1$. Bottom right: numerically measured crossover point $x_0<0$ from Lemma \ref{l:cross} for a range of positive $c$ values. For $c$ larger, the measured value was within machine precision, and we found that $x u+ u^3<0$ for all grid points with $x<0.$  }
  \label{f:num}
\end{figure}

 We observe that, in the case $c=0$, the equation for stationary solutions of \eqref{e:aclin} is known as the Painlev\'{e}-II equation, and the solution we are interested in was studied in detail by Hastings and McLeod in \cite{hastings80}. The results for $c=0$ in \cite{gksv} rely on this earlier work while adding transversality and stability information. The results here can, in this regard, be viewed as an extension of the work in \cite{hastings80} to the case $c\neq 0$. Our approach does however rely on the stability argument in \cite{gksv} and provides an independent proof also of the existence of the front at $c=0$.

 These results also characterize the inner solution for equilibrium fronts in the slowly ramped Allen-Cahn equation \eqref{e:actanh} with hyperbolic tangent heterogeneity. Such inner heteroclinic solutions can be used as organizing orbits on the singular blow-up sphere and we expect a heteroclinic analysis similar to that of \cite[\S 5]{gksv} to give rigorous existence and asymptotics of the front solutions $u(x;c)$ in \eqref{e:actanh}. See Section \ref{s:num} for numerics supporting this.

\paragraph{Outline of the proof.} The proof consists of the following steps:
\begin{enumerate}
 \item  show that, for all $c$, $\mathcal{L}_c$ is negative for any admissible $u_0(x;c)$;
 \item prove that, for each $c \ll -1$, there exists a unique admissible solution;
  \item prove that, for each $c \gg 1$, there exists a unique admissible solution;
 \item  demonstrate that, if each solution $u_0(x;c)$ in a family of admissible solutions is monotone in $x$, then the solutions are also monotone in $c$, that is, $\partial_c u(x;c)<0$ for all $x$;
 \item prove that
 the set of $c$ such that there exists a solution with $\partial_x u_*(x;c)< 0$ for all $x$ is open;
 \item prove that the set of $c$ such that there exists a solution with $\partial_x u_*(x;c)<0$ for all $x$ is closed, using a priori bounds, a compactness argument, and the second order structure.
\end{enumerate}
Together, these steps will establish both Theorem \ref{t:1} and Proposition \ref{p:1}.

This article is organized as follows. After gathering some results on asymptotics of solutions as $x\to\pm\infty$ in Section \ref{s:asy}, step (i) is carried out in Section \ref{s:2}, the limits in (ii) and (iii) are analyzed in Sections \ref{s:3} and \ref{s:4}, respectively, and steps (iv)-(vi) are performed in Section \ref{s:5} to complete the proofs of Theorem \ref{t:1} and Proposition \ref{p:1}. Further results from numerical simulations are presented in Section~\ref{s:num}, and Section~\ref{s:concl} contains conclusions and discussion.

\section{Asymptotics at \texorpdfstring{$x=\pm\infty$}{Lg} }
\label{s:asy}

Throughout this section, we fix an arbitrary value of the parameter $c\in \mathbb{R}$ and analyze the asymptotics of stationary solutions of \eqref{e:aclin} with the prescribed limits \eqref{e:lim} as $x \to \infty$ and as $x \to -\infty$, beginning with the former.

\begin{lemma}[Stable manifold at $+\infty$]\label{l:asyp}
The set of initial conditions at $x=x_0$ for which stationary solutions of \eqref{e:aclin} satisfy the limit \eqref{e:lim} as $x \to +\infty$ forms a smooth one-dimensional manifold $W^\mathrm{s}_+(x_0;c)$ in the space $(u,u_x)$. The asymptotics of solutions on this stable manifold in the limit $x \to \infty$ are given by
 \begin{equation}\label{e:asyp}
  u(x)=\alpha_+ \exp\left(-\frac{2}{3}\left(x+\frac{c^2}{4}\right)^{3/2}-\frac{c}{2}x\right)x^{-1/4}(1+\rmO(x^{-1})),
 \end{equation}
where the coefficient $\alpha_+$ depends smoothly on the point $(u,u_x)\in W^\mathrm{s}_+(x_0;c)$ and the parameter.
 %The set of initial conditions at $x=x_0$ to \eqref{e:aclin} with limit at $+\infty$ as in \eqref{e:lim} forms a smooth one-dimensional manifold $W^\mathrm{s}_+(x_0;c)$ in the space $(u,u_x)$. The asymptotics of such solutions depend smoothly on the initial condition in the one-dimensional manifold in the sense that the leading-order coefficient $\alpha_+$ in the asymptotic expansion
 %\begin{equation}\label{e:asyp}
 % u(x)=\alpha_+ \exp\left(-\frac{2}{3}\left(x+\frac{c^2}{4}\right)^{3/2}-\frac{c}{2}x\right)x^{-1/4}(1+\rmO(x^{-1}))
 %\end{equation}
 %depends continuously on the point $(u,v)\in W^\mathrm{s}_+(x_0;c)$ and the parameter.
\end{lemma}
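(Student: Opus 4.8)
The plan is to reduce the stationary form of \eqref{e:aclin}, namely $u_{xx}+cu_x-xu-u^3=0$, to a perturbed Airy equation and then to construct the decaying solutions by a contraction argument. First I would remove the drift and shift the turning point by setting $u=\rme^{-cx/2}v$ and $\xi=x+c^2/4$; a direct computation converts the equation into $v_{\xi\xi}=\xi v+g(\xi)v^3$ with $g(\xi)=\rme^{c^3/4}\rme^{-c\xi}$. The homogeneous part is exactly the Airy equation $v_{\xi\xi}=\xi v$, spanned by $\mathrm{Ai}(\xi)$ and $\mathrm{Bi}(\xi)$, with $\mathrm{Ai}$ decaying and $\mathrm{Bi}$ growing as $\xi\to\infty$ and Wronskian $\mathrm{Ai}\,\mathrm{Bi}'-\mathrm{Ai}'\mathrm{Bi}=1/\pi$. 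Since any solution with $u\to 0$ has $u^2\to 0$ while $x\to\infty$, the cubic term obeys $u^3=\rmo(xu)$, so it is an asymptotically negligible perturbation of the linear turning-point problem; this is what forces the leading behaviour to be purely Airy.

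Next I would set up the variation-of-constants integral operator whose fixed points are the decaying solutions, choosing the integration limits so that the growing mode $\mathrm{Bi}$ carries no amplitude at $+\infty$:
\[
 \mathcal{T}[v](\xi)=\alpha\,\mathrm{Ai}(\xi)-\pi\,\mathrm{Ai}(\xi)\int_{\xi_*}^{\xi}\mathrm{Bi}(s)g(s)v(s)^3\,\rmd s-\pi\,\mathrm{Bi}(\xi)\int_{\xi}^{\infty}\mathrm{Ai}(s)g(s)v(s)^3\,\rmd s .
\]
Working on the weighted space $X=\{v\in C([\xi_*,\infty)):\ \|v\|=\sup_{\xi\geq\xi_*}|v(\xi)|/\mathrm{Ai}(\xi)<\infty\}$, the standard asymptotics $\mathrm{Ai}(\xi)\sim\frac{1}{2\sqrt\pi}\xi^{-1/4}\rme^{-\frac23\xi^{3/2}}$ and $\mathrm{Bi}(\xi)\sim\frac{1}{\sqrt\pi}\xi^{-1/4}\rme^{+\frac23\xi^{3/2}}$ show that the integrand $\mathrm{Ai}(s)g(s)v(s)^3$ decays like $\rme^{-cs}\rme^{-\frac83 s^{3/2}}$, so both correction terms are super-exponentially smaller than $\mathrm{Ai}(\xi)$; in particular, the factor $\rme^{-c\xi}$ in $g$ that grows when $c<0$ is dominated by the cubic Airy decay. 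Hence $\mathcal{T}$ maps a small ball of $X$ into itself and is a contraction once $\xi_*$ is large, producing for each $\alpha\in\R$ and each $c$ a unique solution $v(\cdot;\alpha,c)=\alpha\,\mathrm{Ai}\,(1+\rmo(1))$. Reading off the leading order and returning to $u=\rme^{-cx/2}v$ then yields \eqref{e:asyp} with $\alpha_+=\alpha/(2\sqrt\pi)$, the $\rmO(x^{-1})$ error coming from $(x+c^2/4)^{-1/4}=x^{-1/4}(1+\rmO(x^{-1}))$, which dominates the super-exponentially small nonlinear correction.

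For the manifold statement I would invoke the uniform-contraction-with-parameters principle: since $\mathcal{T}$ depends smoothly (indeed analytically) on $(\alpha,c)$, so does its fixed point, and therefore $\alpha\mapsto(v(\xi_*;\alpha),v_\xi(\xi_*;\alpha))$ is a smooth injective immersion of $\R$ (distinct $\alpha$ give distinct leading asymptotics, hence distinct solutions). Flowing this one-parameter family backwards under the ODE from $\xi_*$ down to $\xi_0=x_0+c^2/4$, and undoing the substitution $u=\rme^{-cx/2}v$ — both smooth diffeomorphisms of the phase plane — produces the smooth one-dimensional manifold $W^{\mathrm{s}}_+(x_0;c)$, with $\alpha_+$ depending smoothly on the point and on $c$. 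That every decaying solution lies on this manifold follows because, by the a priori bound $u^2\to 0$, any such solution eventually enters the region $\xi\geq\xi_*$ in which the contraction characterizes decaying solutions by their value of $\alpha$.

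The main obstacle I anticipate is the bookkeeping in the integral estimates: one must verify that the chosen integration limits genuinely annihilate the growing $\mathrm{Bi}$-component, so that the fixed point truly decays rather than merely appearing to at leading order, and that the weighted estimates close uniformly in $c$ — especially for $c<0$, where the prefactor $\rme^{-c\xi}$ in $g$ grows and one relies entirely on the $\rme^{-2s^{3/2}}$ decay of $\mathrm{Ai}(s)^3$ to win. The remaining smoothness and embedding claims are then routine consequences of smooth dependence on initial data and on parameters.
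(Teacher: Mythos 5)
Your proposal follows essentially the same route as the paper's (much terser) proof: conjugate by $\rme^{-cx/2}$ and shift to $\xi=x+c^2/4$ to obtain a nonlinearly perturbed Airy equation, read off the leading behaviour from the asymptotics of $\mathrm{Ai}$, and capture the decaying solutions via a variation-of-constants fixed-point argument that also yields smooth dependence on the amplitude $\alpha$ and on $c$. The computations (the form of $g(\xi)$, the Wronskian normalization, and the weighted estimates showing the nonlinear correction is super-exponentially subdominant uniformly in $c$) are correct, so this is a valid filled-in version of the paper's sketch.
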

\begin{Proof}
The asymptotics of solutions of the steady state equation of \eqref{e:aclin} linearized about $u=0$ are readily obtained by considering $w(x)=\exp(\frac{c}{2}x)u(x)$ which solves a shifted Airy equation $w''-(x+\frac{c^2}{4})w=0$. The asymptotics of the Airy function as $x \to \infty$ are then used to obtain \eqref{e:asyp}. (See, for example formula 9.7.5 in \cite{DLMF} for the expansion of the Airy function.) Solutions to the nonlinear equation are readily obtained by a fixed point argument, which also gives smoothness.
\end{Proof}

\begin{remark}
    We opted to use the  classical approach above to rigorously derive the asymptotics for small $u(x)$ as $x \to \infty$ in the proof of Lemma \ref{l:asyp}, since it is succinct.
There is an  alternate method based on  rewriting the vector field as a third-order autonomous system, effectively conpactifying the independent variable, and using techniques from dynamical systems, including invariant manifold theory and the method of geometric desingularization.
We use this alternate method below in the proofs of Lemmas \ref{l:asym} and \ref{l:asymc}, to rigorously derive the asymptotics for $u(x) \sim \sqrt{-x} $ as $x \to -\infty$, since it brings out the dynamics and geometry of solutions along the unstable manifold
\end{remark}
% {\bf Remark 2.1.}
% {\it We opted to use the  classical approach above to rigorously derive the asymptotics for small $u(x)$ as $x \to \infty$ in the proof of Lemma \ref{l:asyp}, since it is succinct.
% There is an  alternate method based on  rewriting the vector field as a third-order autonomous system, effectively conpactifying the independent variable, and using techniques from dynamical systems, including invariant manifold theory and the method of geometric desingularization.
% We use this alternate method below in the proofs of Lemmas \ref{l:asym} and \ref{l:asymc}, to rigorously derive the asymptotics for $u(x) \sim \sqrt{-x} $ as $x \to -\infty$, since it brings out the dynamics and geometry of solutions along the unstable manifold. }

\bigskip
We now turn to the asymptotics for $x \to -\infty$, considering separately the cases of $c=0$ and $c\ne 0$.

\begin{lemma}[$c=0$: Unstable manifold at $-\infty$]\label{l:asym}
For $c=0$, the set of initial conditions at $x=x_0$ for which the solutions of \eqref{e:aclin} satisfy \eqref{e:lim} in the limit as $x \to -\infty$ forms a smooth one-dimensional manifold $W^\mathrm{u}_-(x_0;c=0)$ in the space $(u,u_x)$.  The asymptotics of solutions on this unstable manifold in the limit $x \to -\infty$ are given by
\begin{equation}\label{e:asym-czero}
  u(x)=(-x)^{1/2}\left(1 - \frac{1}{8(-x)^3} + \rmO(x^{-6}) +\alpha_-\frac{\rme^{-\frac{2\sqrt{2}}{3}(-x)^{3/2}}}{(-x)^{3/4}} +\rmO(\rme^{-\frac{4\sqrt{2}}{3}(-x)^{3/2}})\right),
\end{equation}
where the coefficient $\alpha_-$
depends smoothly on the initial condition in the one-dimensional manifold in the sense that it depends continuously on the point $(u,u_x)\in W^\mathrm{s}_+(x_0;c=0)$.
 \end{lemma}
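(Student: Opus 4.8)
The plan is to rewrite the stationary equation for \eqref{e:aclin} at $c=0$, namely $u_{xx}=xu+u^3$, as a first-order system and then to desingularize the degenerate behavior at $x=-\infty$ by a blow-up scaling adapted to the algebraic branch $u\sim\sqrt{-x}$. Setting $s=-x$ and $v=u_x$, the planar system $u_x=v$, $v_x=xu+u^3$ carries, to leading order, the invariant algebraic curve $u^2=-x$. Substituting a formal series $u=(-x)^{1/2}(1+\phi)$ and balancing the dominant contributions of $u^3-(-x)u$ against $u_{xx}$ produces, order by order, the algebraic part $1-\tfrac18(-x)^{-3}+\rmO((-x)^{-6})$ of \eqref{e:asym-czero}. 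This fixes a formal slow (center) branch; the substance of the lemma is to promote it to a genuine one-parameter family of true solutions and to capture the exponentially small correction and the manifold structure.

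Next I would introduce the rescaling $u=(-x)^{1/2}p$, $v=(-x)^{-1/2}q$ (so that $p\to 1$, $q\to-\tfrac12$ on the branch) together with the fast variable $\eta=\tfrac23(-x)^{3/2}$ and the blow-up coordinate $\zeta=(-x)^{-3/2}\to 0$, which satisfies $\zeta=\tfrac{2}{3\eta}$. In these coordinates the flow becomes an asymptotically autonomous planar system in $(p,q)$ over the base $\zeta$, with $\zeta$ a slow (center) direction and a hyperbolic equilibrium $P_*$ on $\{\zeta=0\}$ representing the algebraic branch. Linearizing at $P_*$, the off-branch directions carry eigenvalues $\pm\sqrt2$: the eigenvalue $-\sqrt2$ (decay in $\eta$), upon unfolding $\eta=\tfrac23(-x)^{3/2}$, yields the exponential factor $\exp(-\tfrac{2\sqrt2}{3}(-x)^{3/2})$, while the slow $\zeta=\tfrac{2}{3\eta}$ dependence enters the WKB transport equation $\psi_{\eta\eta}+\tfrac{1}{3\eta}\psi_\eta=2\psi$ (obtained by linearizing $u_{xx}=xu+u^3$ about $\sqrt{-x}$, which gives $\psi_{xx}=2(-x)\psi$) and forces the amplitude $\eta^{-1/6}\propto(-x)^{-1/4}$. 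Dividing by the prefactor $(-x)^{1/2}$ produces exactly the algebraic prefactor $(-x)^{-3/4}$ of \eqref{e:asym-czero}, and the coordinate $\alpha_-$ along this one-dimensional strong-stable fiber (in $\eta$-time) is precisely the free parameter in the statement.

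With the hyperbolic structure in hand, I would invoke the (un)stable manifold theorem in the desingularized setting — equivalently a Fenichel/normally-hyperbolic-invariant-manifold argument, since $P_*$ is a point on a normally hyperbolic invariant manifold parametrized by $\zeta$ — to conclude that the set of solutions converging to the branch as $\eta\to\infty$ (i.e.\ as $x\to-\infty$) forms a smooth one-dimensional manifold, with continuous/smooth dependence of $\alpha_-$ on the base point $(u,u_x)\in W^\mathrm{u}_-(x_0;0)$. The remainder estimates $\rmO((-x)^{-6})$ and $\rmO(\exp(-\tfrac{4\sqrt2}{3}(-x)^{3/2}))$ then follow by iterating the invariant-foliation expansion: the doubled exponential rate reflects the next harmonic $2\times(-\sqrt2)$ and the $(-x)^{-6}$ the next algebraic order. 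Translating the resulting parametrization from $(p,q,\zeta)$ back to $(u,u_x)$ at $x=x_0$ gives $W^\mathrm{u}_-(x_0;0)$ and \eqref{e:asym-czero}.

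The main obstacle is the desingularization itself: turning the degenerate equilibrium at infinity — where $u$ and $|x|$ both blow up while $v\to 0$ — into a normally hyperbolic object, and then controlling the algebraic (center) and exponential (hyperbolic) contributions simultaneously with sharp remainders. Because the system is only asymptotically autonomous, the hyperbolic rates depend on $\eta$, so the clean eigenvalues $\pm\sqrt2$ govern only the leading order, and the precise $(-x)^{-3/4}$ prefactor requires tracking the slow drift of the eigendirection along the invariant manifold. This interaction between the slow $\zeta$-motion and the fast $e^{\pm\sqrt2\,\eta}$ contraction/expansion is the delicate point, and I expect it to be handled by exchange-lemma-type estimates that separate the center and hyperbolic scales.
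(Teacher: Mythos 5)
Your proposal is correct and follows essentially the same route as the paper: compactify/desingularize at $x=-\infty$ via a power of $(-x)^{-1/2}$ appended as a slow variable, rescale $u$ by $(-x)^{1/2}$ and pass to the fast time $\propto(-x)^{3/2}$, identify the hyperbolic equilibrium with eigenvalues $\pm\sqrt{2}$ plus a center direction, and read off the algebraic terms from a dominant balance on the center-unstable manifold and the exponential term (with its $(-x)^{-3/4}$ prefactor and the fiber coordinate $\alpha_-$) from the hyperbolic directions. The only cosmetic difference is that you extract the $(-x)^{-1/4}$ amplitude via a WKB transport equation, whereas the paper gets the same factor from the $-\tfrac{1}{2y}$ correction in the reduced scalar ODE on the explicitly computed center-unstable manifold; no exchange-lemma machinery is actually needed once the slow variable is appended to make the system autonomous.
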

\begin{remark}
The first terms in \eqref{e:asym-czero}, $\sqrt{-x}\left( 1 - \frac{1}{8} (-x)^{-3} + \mathcal{O}( (-x)^{-6}) \right)$ in the asymptotics for $x \to -\infty$ are precisely the terms in the asymptotic expansion of the Hastings-McLeod solution $u_{HM}(x)$ of the Painlev\'{e}-II equation.
See for example formula (25) in \cite{Cleri_2020}, with $N=3$, where the Painlev\'{e}-II equation is written 
in the form $u''-xu =2u^3$.
\end{remark}
% \noindent{\bf Remark 2.2.}
% {\it The first terms in \eqref{e:asym-czero}, $\sqrt{-x}\left( 1 - \frac{1}{8} (-x)^{-3} + \mathcal{O}( (-x)^{-6}) \right)$ in the asymptotics for $x \to -\infty$ are precisely the terms in the asymptotic expansion of the Hastings-McLeod solution $u_{HM}(x)$ of the Painlev\'{e}-II equation.
% See for example formula (25) in \cite{Cleri_2020}, with $N=3$, where the Painlev\'{e}-II equation is written 
% in the form $u''-xu =2u^3$.}

\begin{Proof}
    We perform a series of coordinate changes that exhibit a regular perturbation problem at $x=-\infty$. First, we set $\beta=(-x)^{-1/2}$ to compactify the independent variable, and we find
    \begin{align*}
        u_x&=v,\\
        v_x&=-\beta^{-2}u+u^3,\\
        \beta_x&=\frac{1}{2}\beta^3.
    \end{align*}
    Then, scaling $u=\beta^{-1}\tilde{u}$, $v=\beta^{-2}\tilde{v}$, and $\frac{d}{dx}=\beta^{-1}\frac{d}{dy}$ (which is a natural scaling obtained by using the method of geometric desingularization to desingularize the vector field in the limit $\beta \to 0$), we find 
    \begin{align*}
        \tilde{u}_y&= \tilde{v}+\frac{1}{2}\beta^3\tilde{u},\\
        \tilde{v}_y& =-\tilde{u}+\tilde{u}^3+\beta^3\tilde{v},\\
        \beta_y&=\frac{1}{2}\beta^4,
    \end{align*}
    where $y=-\frac{2}{3}(-x)^{3/2}$. Here, $(\tilde{u},\tilde{v},\beta)=(1,0,0)$ is the equilibrium that corresponds to the desired asymptotic behavior. 
    
    Let $\tilde{u}=1+\bar{u}$ and $\tilde{v} =\bar{v}$, so that the fixed point is at the origin. Then, the system is
    \begin{align} \label{e:ubar-vbar-beta}
        \bar{u}_y&= \bar{v}+\frac{1}{2}\beta^3 + \frac{1}{2}\beta^3 \bar{u},\nonumber \\
        \bar{v}_y& = 2\bar{u}+3\bar{u}^2 + \bar{u}^3+\bar{v} \beta^3,\\
        \beta_y&=\frac{1}{2}\beta^4. \nonumber
        \end{align}
    The linearization at $(0,0,0)$ is hyperbolic in the $(\bar{u},\bar{v})$-plane and possesses a center direction along the $\beta$ axis. Standard invariant manifold theory then gives the existence and smoothness of the center-unstable manifold of the origin. 
    Indeed, the function whose graph is the center-unstable manifold has the following expansion: 
    \begin{equation}
        \label{e:Wc}
\bar{v}= k(\bar{u},\beta)= \sqrt{2}\bar{u} + \frac{1}{\sqrt{2}} \bar{u}^2 - \frac{1}{2} \beta^3
+ \frac{1}{4} \bar{u}\beta^3 - \frac{1}{8} \bar{u}^2 \beta^3 + \frac{1}{4\sqrt{2}} \beta^6 
+\mathcal{O}(\bar{u}^6, \bar{u}^5 \beta, \bar{u}^4 \beta^2, \bar{u}^3\beta^3,
\bar{u}^2 \beta^4, \bar{u}\beta^5, \beta^7).
\end{equation} 
(This is obtained using the invariance condition, and some of the coefficients on the higher order terms vanish.)
Hence, on the center-unstable manifold, the governing equation is
\begin{equation} \label{e:equations-on-Wc}
    \bar{u}_y = \left(\sqrt{2} + \frac{3}{4} \beta^3 + \mathcal{O}(\beta^6)\right) \bar{u} 
    + \left( \frac{1}{\sqrt{2}} - \frac{1}{8} \beta^3 + \mathcal{O}(\beta^6)
    \right) \bar{u}^2 + \mathcal{O} (\bar{u}^3) + 
    \frac{1}{4\sqrt{2}} \beta^6 + \mathcal{O}(\beta^9),
\end{equation}
which is derived by substituting \eqref{e:Wc} for $\bar{v}$
into the first equation of \eqref{e:ubar-vbar-beta}. This is the equation from which we derive the asymptotics.

To derive the asymptotics, we first find the algebraic terms. In particular, the term $\frac{1}{4\sqrt{2}}\beta^6$ in equation \eqref{e:equations-on-Wc} is
the lowest order term that is independent of $\bar{u}$.
Hence, to leading order, the solution $\bar{u}$ is given by 
$ \sqrt{2}\bar{u} + \frac{1}{4\sqrt{2}} \beta^6 =0$, which is the dominant balance.  
That is, $ \bar{u}= -\frac{1}{8}\beta^6= -\frac{1}{18} y^{-2} = - \frac{1}{8} (-x)^{-3}$, to leading order as $\beta\to 0$ and $y ,x  \to -\infty$, respectively. Then, at higher order, one finds terms proportional to higher powers of $(-y)^{-2}$. 

In addition to these algebraic terms involving the inverse powers, the regular perturbation expansion of small solutions $\bar{u}$ for $y \to -\infty$ also contains exponential terms.
Indeed, solving the truncated equation 
$ \bar{u}_y = (\sqrt{2} - \frac{1}{2y} ) \bar{u} + \frac{1}{9\sqrt{2}} y^{-2}$ (where we recall that $\beta= \left(-\frac{3}{2} y\right)^{-1/3}$), one finds
\begin{equation}
    \bar{u}(y) 
    = c_1 e^{2\sqrt{y}} (-y)^{-1/2} 
    - \frac{1}{18} y^{-2}.
\end{equation}

Finally, 
taking into account the higher order nonlinear terms in \eqref{e:equations-on-Wc}, and
translating the solution $\bar{u}(y)$ and its expansion back to the original variables $x$ and $u$ (where we recall $y = -\frac{2}{3} (-x)^{3/2}$ and $u(x)= \sqrt{-x}(1+\bar{u}(x))$), one finds \eqref{e:asym-czero}. 
\end{Proof}

\medskip
A similar, but less degenerate result holds for $c\ne 0$, and we include an outline of the proof.

\begin{lemma}[$c \ne 0$: Unstable manifold at $-\infty$]\label{l:asymc}
For $c\ne 0$, the set of initial conditions at $x=x_0$ through which the solutions of \eqref{e:aclin} satisfy the limit \eqref{e:lim} as $ x \to -\infty$ forms a smooth one-dimensional manifold $W^\mathrm{u}_-(x_0;c)$ in the space $(u,u_x)$.  The asymptotics of solutions on this unstable manifold are given by
 \begin{equation}\label{e:asym-cnotzero}
 \begin{split} u(x)=&\\(-x)^{1/2}&\left(1+ \frac{c}{2\sqrt{2}x^{2}} +\rmO(x^{-3}) +\alpha_-^c {\rm exp} \left( -\frac{2\sqrt{2}}{3}(-x)^{3/2} -\frac{cx}{2} -\frac{c^2}{4\sqrt{2}} (-x)^{1/2}\right) +\rmO(\rme^{-\frac{4\sqrt{2}}{3}(-x)^{3/2}}) \right),
 \end{split}
\end{equation}
where $\alpha_-^c$ depends smoothly on the point $(u,v)\in W^\mathrm{s}_+(x_0;c)$ and the parameter.
 \end{lemma}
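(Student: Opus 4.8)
The plan is to follow the same geometric desingularization strategy used in the proof of Lemma~\ref{l:asym}, but now tracking the extra terms generated by the drift $c u_x$ in \eqref{e:aclin}. I would begin by writing the stationary equation $u_{xx}+cu_x-xu-u^3=0$ as a first-order system in $(u,v,\beta)$ with $v=u_x$ and $\beta=(-x)^{-1/2}$, so that $\beta_x=\tfrac12\beta^3$. The only change from the $c=0$ case is the appearance of a term $c\,v$ in the $v_x$ equation, i.e.\ $v_x=-\beta^{-2}u-c\,v+u^3$ (note the sign, since we integrate toward $x\to-\infty$). I would then apply the identical geometric desingularization scaling $u=\beta^{-1}\tilde u$, $v=\beta^{-2}\tilde v$, and $\tfrac{d}{dx}=\beta^{-1}\tfrac{d}{dy}$, with $y=-\tfrac23(-x)^{3/2}$. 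The key structural observation is that the drift term now contributes at a \emph{lower} order in $\beta$ than the $\beta^3$-perturbations seen in \eqref{e:ubar-vbar-beta}: the rescaled drift enters the $\tilde v_y$ equation as a term proportional to $c\,\beta\,\tilde v$, which is why the $c\ne0$ case is ``less degenerate'' and produces the additional $(-x)^{1/2}$-type correction visible in the exponent of \eqref{e:asym-cnotzero}.

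Next I would shift the fixed point to the origin via $\tilde u=1+\bar u$, $\tilde v=\bar v$, obtaining a system analogous to \eqref{e:ubar-vbar-beta} but with the extra drift contributions. The linearization at $(\bar u,\bar v,\beta)=(0,0,0)$ remains hyperbolic in the $(\bar u,\bar v)$-plane with a center direction along $\beta$, so standard invariant manifold theory again furnishes a smooth center-unstable manifold $\bar v=k(\bar u,\beta)$ whose graph I would expand as a power series in $\bar u$ and $\beta$, solving order-by-order from the invariance condition as in \eqref{e:Wc}. Substituting this graph into the $\bar u_y$ equation yields a scalar reduced equation on the center-unstable manifold, playing the role of \eqref{e:equations-on-Wc}. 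From this reduced equation I would extract, first, the leading algebraic correction: the analysis should show the drift produces the $\tfrac{c}{2\sqrt2}x^{-2}$ term by a dominant balance between the linear $\sqrt2\,\bar u$ term and a $\beta^{?}$-term carrying the factor $c$. Then, by solving the truncated linear equation including the drift-generated coefficient $-\tfrac{c}{2}\beta^{-1}\sim -\tfrac c2(-\tfrac32 y)^{1/3}$, I would recover the modified exponential $\exp\!\left(-\tfrac{2\sqrt2}{3}(-x)^{3/2}-\tfrac{cx}{2}-\tfrac{c^2}{4\sqrt2}(-x)^{1/2}\right)$, whose three terms in the exponent correspond precisely to successive powers of $\beta$ appearing in the integrating factor. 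Finally, translating back through $y=-\tfrac23(-x)^{3/2}$ and $u=\sqrt{-x}(1+\bar u)$ gives \eqref{e:asym-cnotzero}, with $\alpha_-^c$ parametrizing the one-dimensional unstable manifold and inheriting smooth dependence on the initial data and on $c$ from the invariant-manifold construction.

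The main obstacle I anticipate is \emph{bookkeeping the exponential term's subleading corrections in the exponent}. Unlike the $c=0$ case, where the exponent is simply $-\tfrac{2\sqrt2}{3}(-x)^{3/2}$, here the integrating factor for the linearized flow along the unstable direction accumulates contributions at three distinct orders in $(-x)$, and one must integrate $\int \lambda(y)\,dy$ carefully — where $\lambda(y)=\sqrt2+(\text{drift correction})\beta^{-1}+\cdots$ — to confirm that exactly the $-\tfrac{cx}{2}$ and $-\tfrac{c^2}{4\sqrt2}(-x)^{1/2}$ terms appear and that all further contributions are absorbed into the $(-x)^{-3/4}$-type algebraic prefactor and the $\rmO(\mathrm{e}^{-\frac{4\sqrt2}{3}(-x)^{3/2}})$ remainder. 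A secondary, more routine point is verifying that the nonlinear terms $3\bar u^2+\bar u^3$ do not contaminate the claimed orders; as in Lemma~\ref{l:asym} these feed only into higher-order algebraic and doubly-exponential corrections. Because the lemma statement promises only an outline, I would present the system, the center-unstable graph to the needed order, and the dominant balances, deferring the full recursive verification to the reader by analogy with the $c=0$ computation.
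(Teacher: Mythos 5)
Your proposal follows essentially the same route as the paper: the same compactification $\beta=(-x)^{-1/2}$ and desingularizing scaling, the same shift to $(\bar u,\bar v,\beta)$ with the drift entering $\bar v_y$ as $-c\beta\bar v$, reduction to the center-unstable manifold, and the same dominant balances yielding the algebraic correction and the three-term exponent from integrating the $\beta$-dependent linear coefficient. One small slip: the drift correction to the linear coefficient in the reduced equation is $-\tfrac{c}{2}\beta\sim-\tfrac{c}{2}\bigl(-\tfrac{3}{2}y\bigr)^{-1/3}$, not $-\tfrac{c}{2}\beta^{-1}$ as you wrote (the latter would integrate to an $(-x)^{2}$ term in the exponent, contradicting the stated asymptotics), but your final exponent and overall structure are correct.
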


 \begin{Proof}
 We use the same coordinates as in the proof of Lemma \ref{l:asym}; however, we observe that one does not need to go to so high an order in the expansion as we did in the case $c=0$ in the previous lemma,
since the system with $c\ne 0$ is less degenerate.
Let $\tilde{u}=1+\bar{u}$ and $\tilde{v} =\bar{v}$, where we recall
$u=\beta^{-1}\tilde{u}$, $v=\beta^{-2}\tilde{v}$,  
$\beta=(-x)^{-1/2}$, 
and $\frac{d}{dx}=\beta^{-1}\frac{d}{dy}$.
With $c\ne 0$, the equation for stationary solutions of \eqref{e:aclin} is equivalent to the system
    \begin{align} \label{e:ubar-vbar-beta-c}
        \bar{u}_y&= \bar{v}+\frac{1}{2}\beta^3 + \frac{1}{2}\beta^3 \bar{u},\nonumber \\
        \bar{v}_y& = -c \beta \bar{v} + 2\bar{u}+3\bar{u}^2 + \bar{u}^3+\bar{v} \beta^3,\\
        \beta_y&=\frac{1}{2}\beta^4. \nonumber
        \end{align}
The linearization at $(0,0,0)$ is hyperbolic in the $(\bar{u},\bar{v})$-plane and possesses a center manifold in the direction of $\beta$. The function whose graph is the center-unstable manifold has the following expansion: 
    \begin{equation}
        \label{e:Wc-c}
\bar{v}= k_c(\bar{u},\beta)= \sqrt{2}\bar{u} + \frac{1}{\sqrt{2}} \bar{u}^2
-\frac{c}{2} \bar{u} \beta 
+\frac{c}{12} \bar{u}^2 \beta+\frac{c^2}{8\sqrt{2}} \bar{u} \beta^2
-\frac{1}{2} \beta^3
-\frac{c}{24} \bar{u}^3 \beta-\frac{7\sqrt{2}c}{288} \bar{u}^2 \beta^2
+\frac{1}{4}\bar{u}\beta^3
+\frac{c}{2\sqrt{2}} \beta^4
+\mathcal{O}(5).
\end{equation} 
Hence, on the center-unstable manifold, the governing equation is
\begin{equation} \label{e:equations-on-Wc-c}
    \bar{u}_y = \left(\sqrt{2} -\frac{c}{2}\beta  + \frac{c^2}{8\sqrt{2}} \beta^2 + \mathcal{O}(\beta^3)\right) \bar{u} 
    + \left( \frac{1}{\sqrt{2}}
    +\frac{c}{12}\beta + \mathcal{O}(\beta^2)
    \right) \bar{u}^2 
    +\frac{c}{2\sqrt{2}} \beta^4 + \mathcal{O}(\bar{u}^3, \beta^5).
\end{equation}
The asymptotic expansion of $\bar{u}(y)$ as $y \to -\infty$ consists of algebraically and exponentially decaying terms, just as that for $c=0$.
The algebraically decaying terms are
$$
\frac{c}{2\sqrt{2}} \left( \frac{-3y}{2}\right)^{-4/3} + \mathcal{O}\left( \left(\frac{-3y}{2}\right)^{-8/3}\right),
$$
by balancing the linear term with the inhomogeneous term.
The exponentially decaying terms are
$$
{\rm exp}\left[ 
\sqrt{2} y + \frac{c}{2} \left( \frac{-3y}{2}\right)^{2/3} -\frac{c}{4\sqrt{2}} \left( \frac{-3y}{2} \right)^{1/3} + \mathcal{O}(1) 
\right].$$
Finally, one translates this back to the original variables $x$ and $u$ to complete the proof of the lemma.
 \end{Proof}

\section{The linearization at monotone solutions}\label{s:2}

In this section, we carry out step (i) in the proof, establishing that the operator $\mathcal{L}_c$, which is obtained by linearizing about a given admissible solution (recall definition \eqref{e:opL}), and which is densely defined on $L^2(\R)$,  is bounded invertible, and in fact has spectrum with negative real part. Therefore, define 
\[
L^2_1(\R)=\{u\in L^2_\mathrm{loc}\,|\,u(x)(1+|x|)\in L^2\}, \qquad \|u\|_{L^2_1}:=\|u(\cdot)(1+|\cdot|)\|_{L^2}.
\]
\begin{lemma}\label{p:lc}
The operator $\mathcal{L}_c$, considered on $L^2(\mathbb{R})$ with domain $H^2(\mathbb{R})\cap L^2_1(\R)$ is closed, densely defined, and bounded invertible. Moreover, its spectrum is discrete and strictly negative. 
\end{lemma}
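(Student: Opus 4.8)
The plan is to remove the drift term by an exponential conjugation and reduce $\mathcal{L}_c$ to a self-adjoint Schr\"odinger operator with a confining potential, for which the spectral picture is classical; the sign of the spectrum is then extracted from a positive supersolution built out of $\partial_x u_0$. Concretely, set $u=\rme^{-cx/2}w$; a direct computation gives $\mathcal{L}_c(\rme^{-cx/2}w)=\rme^{-cx/2}\mathcal{H}w$, where $\mathcal{H}w:=w''-V(x)w$ and $V(x):=x+\tfrac{c^2}{4}+3u_0^2(x)$. Using $u_0(x)\to 0$ as $x\to+\infty$ and $u_0(x)\sim\sqrt{-x}$ as $x\to-\infty$ (from \eqref{e:lim} and Lemmas~\ref{l:asym}, \ref{l:asymc}), one checks that $V(x)\to+\infty$ at both ends ($V\sim x$ at $+\infty$ and $V\sim -2x$ at $-\infty$). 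Thus $\mathcal{H}$ is a Schr\"odinger operator with confining potential: on $H^2(\R)\cap L^2_1(\R)$ it is self-adjoint, bounded above, has compact resolvent, and therefore has real, discrete spectrum whose largest eigenvalue $\lambda_0$ is simple with a strictly positive eigenfunction $\phi_0$ (the ground state of $-\mathcal{H}$).

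Next I would transfer these properties back to $\mathcal{L}_c$. Closedness and dense definedness follow from the standard theory of such operators (the graph norm is equivalent to $\|\cdot\|_{H^2}+\|\cdot\|_{L^2_1}$, and $C_c^\infty(\R)$ is a core), and discreteness of $\spec\mathcal{L}_c$ follows from the compact embedding $H^2(\R)\cap L^2_1(\R)\hookrightarrow L^2(\R)$ together with nonemptiness of the resolvent set. For the reality of the spectrum I would argue spectral equivalence through the conjugation: any $L^2$-eigenfunction $u$ of $\mathcal{L}_c$ must match the stable/unstable asymptotics of Lemmas~\ref{l:asyp} and \ref{l:asymc}, hence decays like $\exp(-\tfrac{2}{3}|x|^{3/2})$ up to the $\rme^{\mp cx/2}$ prefactors, so $w=\rme^{cx/2}u\in L^2$ is an eigenfunction of $\mathcal{H}$ with the same eigenvalue, and conversely. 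Therefore $\spec\mathcal{L}_c=\spec\mathcal{H}$ is real and discrete, which establishes every assertion of the lemma except strict negativity.

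Strict negativity I would obtain from a supersolution. Differentiating the steady-state form of \eqref{e:aclin} in $x$ yields the identity $\mathcal{L}_c(\partial_x u_0)=u_0$. Since $u_0$ is monotonically decreasing and positive, $\Phi:=-\partial_x u_0>0$ satisfies $\mathcal{L}_c\Phi=-u_0<0$. Passing to the conjugated picture with $W:=\rme^{cx/2}\Phi>0$ gives $\mathcal{H}W=-\rme^{cx/2}u_0<0$ pointwise. Pairing against the positive ground state $\phi_0$ and using self-adjointness of $\mathcal{H}$,
\[
\lambda_0\,\langle W,\phi_0\rangle=\langle W,\mathcal{H}\phi_0\rangle=\langle \mathcal{H}W,\phi_0\rangle=-\langle \rme^{cx/2}u_0,\phi_0\rangle<0,
\]
while $\langle W,\phi_0\rangle>0$ because both factors are positive; hence $\lambda_0<0$, and since $\lambda_0$ is the largest eigenvalue, $\Re\,\spec\mathcal{L}_c=\spec\mathcal{L}_c<0$.

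The main obstacle is the rigorous justification of this last pairing. The function $\Phi=-\partial_x u_0$ decays only \emph{algebraically} as $x\to-\infty$ (from $u_0\sim\sqrt{-x}$ one gets $\partial_x u_0\sim-\tfrac12(-x)^{-1/2}$), so $W=\rme^{cx/2}\Phi$ need not belong to $L^2$ when $c<0$, and is not a genuine element of the domain; the symmetric identity $\langle \mathcal{H}W,\phi_0\rangle=\langle W,\mathcal{H}\phi_0\rangle$ must therefore be verified directly rather than quoted from self-adjointness. The way through is to use the super-exponential decay $\phi_0(x)\sim\exp(-\tfrac{2\sqrt2}{3}(-x)^{3/2})$ supplied by Lemma~\ref{l:asymc} (and the analogous decay at $+\infty$ from Lemma~\ref{l:asyp}), which dominates any exponential growth of $W$ and forces both the integrals and the boundary terms arising in the integration by parts to converge, the latter to zero. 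This is precisely where the asymptotic estimates of Section~\ref{s:asy} are indispensable; everything else is standard Schr\"odinger-operator theory.
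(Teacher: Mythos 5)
Your proposal is correct and follows essentially the same route as the paper: conjugation by $\rme^{cx/2}$ to the self-adjoint Schr\"odinger operator with confining potential $V=x+c^2/4+3u_0^2$, positivity of the ground state, the identity $\mathcal{L}_c\partial_x u_0=u_0$ paired against $\phi_0$ to force $\lambda_0<0$, and transfer of the spectrum back to $\mathcal{L}_c$ via the super-exponential decay of eigenfunctions (the paper phrases this last step through a relatively compact perturbation argument, but the substance is the same). Your explicit treatment of the fact that $W=\rme^{cx/2}\partial_x u_0$ may fail to lie in $L^2$, so that the symmetric pairing must be justified by integration by parts against the rapidly decaying $\phi_0$, is actually more careful than the paper's own write-up of that step.
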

\begin{proof} As a first step, we consider the  operator $\widetilde{\mathcal{L}}_c u :=\left( e^{c x/2}\mathcal{L}_c e^{-c x/2}\right) u = u_{xx} - V(x) u $, where the potential is defined as $V(x):= x + c^2/4 + 3u_0^2$, and observe the asymptotic behavior 
\[
V(x) \sim -2x + c^2/4 \quad\mathrm{for } \ \ x\rightarrow-\infty,\qquad V(x)\sim x + c^2/4\quad\mathrm{for } \ \ x\rightarrow+\infty,
\]
induced by  $u_0 \sim \sqrt{-x}$, $x\to -\infty$, and  $u_0 \sim 0$, $x\to +\infty$, respectively. Standard results on Schr\"odinger operators then imply that $\widetilde{\mathcal{L}}_c$ is self-adjoint on $L^2(\R)$ with domain $H^2(\mathbb{R})\cap L^2_1(\R)$; see for instance \cite[\S8.6]{hislopsigal}. Moreover, since  $V(x)$ is positive outside of a compact neighborhood of the origin, and $V(x)\rightarrow +\infty$ as $|x|\rightarrow+\infty$, standard results on Schr\"odinger operators (see for example \cite[Thm. XIII.47]{ReedSimonIV}) then give that $\widetilde{\mathcal{L}_c}$ has no continuous spectrum, so that the spectrum consists only of discrete spectrum $\{\lambda_j\}_0^\infty$, which satisfies $\lambda_0\geq\lambda_1\geq \lambda_2 \geq \cdots$ 
and $\lim_{j\to \infty} \lambda_j = -\infty$. Furthermore, possibly after shifting the operator by a finite constant, these results also give that the  ground state eigenfunction corresponding to the eigenvalue $\lambda_0$ is strictly positive, {\it i.e.,} $\phi_0>0$. We claim that the spectrum is in fact strictly negative. To see this, we argue by contradiction. Assume $\lambda_0\geq0$, and differentiate the steady-state equation \eqref{e:aclin} in $x$ to obtain $\mathcal{L}_c \partial_x u_0 = u_0$, and hence $\widetilde{\mathcal{L}}_c e^{cx/2}\partial_x u_0 = e^{cx/2} u_0$. Then, we calculate
$$
\lambda_0 \langle \phi_0, e^{cx/2}\partial_x u_0\rangle_{L^2} = 
\langle \phi_0, \widetilde{\mathcal{L}}_c e^{cx/2} \partial_x u_0\rangle_{L^2} = 
\langle \phi_0, e^{cx/2} u_0\rangle_{L^2} >0.
$$
This is a contradiction, since $\phi_0>0, u_0>0,$ and $\partial_x u_0<0$ by the properties of an admissible solution. This demonstrates that $\lambda_0<0$,
as claimed. 

It remains to conclude the desired properties of the unconjugated operator $\mathcal{L}_c$ from the properties of $\widetilde{\mathcal{L}}_c$. Note that we cannot argue simply that the two operators are conjugate since we wish to establish properties of $\mathcal{L}_c$ on $L^2$ rather than the exponentially weighted space induced by the conjugation. %We therfore view 

First note that the embedding $H^2\cap L^2_1\to H^1$ is compact. This follows readily from the fact that the embedding is the norm limit of truncated embeddings, where $u$ is cut off at $|x|=R$ by a smooth cutoff function. 
Convergence of these truncated embeddings in turn is seen readily by estimating $u$ outside of a small ball using $\|u\|_{H^1}\lesssim \eps \|u\|_{H^2}+C(\eps)\|u\|_{L^2}$, where the latter term is small due to control of decay for $u$ in $L^2_1$.  
As a consequence we can view $\mathcal{L}_c$ as a relatively compact perturbation of  $\widetilde{\mathcal{L}}_c$ which is thereby closed \cite[\S IV.1.3.1.11]{kato} with the same domain of definition. 
In particular, the spectrum of  $\mathcal{L}_c$ consists entirely of point spectrum. 
Inspecting the eigenvalue problem, one immediately finds 
 Gaussian decay for any potential eigenfunction, so that spectra of  $\mathcal{L}_c$ and  $\widetilde{\mathcal{L}}_c$  in fact agree. 
\end{proof}

\medskip
\begin{remark}
    The analysis relies on the more generally valid fact that the eigenfunction to the ground state of the Schr\"odinger operator is sign definite, and on the monotonicity of the parameter ramp. 
    %The reasoning is similar to that used to establish Proposition 6.2 and the stability result in Section 8.1 of \cite{gksv}.
\end{remark}
% {\bf Remark 3.1.}
% {\it  }

\section{Existence and qualitative properties for 
\texorpdfstring{$c\ll -1$}{Lg}}
\label{s:3}

In this section, we analyze stationary solutions of \eqref{e:aclin} in the limit $c \ll -1$.
We establish the existence, uniqueness, monotonicity, and asymptotics of these solutions. This constitutes step (ii) in the proof.
\begin{lemma}\label{l:cllm1}
For each $c \ll -1$, there exists a unique admissible solution $u(x;c)$ of \eqref{e:aclin}, {\it i.e.,} a unique, monotone decreasing, stationary solution ,which has the asymptotics
$u \to \sqrt{-x}$ as $x \to -\infty$ and $u \to 0$ as $x \to \infty$.
\end{lemma}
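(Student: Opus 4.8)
The plan is to exploit the large-$|c|$ scaling to reduce the second-order boundary value problem to an explicitly solvable first-order one. Writing $c<0$ and introducing
\begin{equation*}
 x=\sqrt{-c}\,\xi,\qquad u(x)=(-c)^{1/4}U(\xi),\qquad \nu:=(-c)^{-3/2},
\end{equation*}
the stationary equation \eqref{e:aclin} becomes
\begin{equation*}
 \nu\,U_{\xi\xi}-U_\xi-\xi U-U^3=0,
\end{equation*}
so that $c\ll-1$ corresponds to the singular limit $\nu\to0$. The prescribed limits \eqref{e:lim} translate into $U\sim\sqrt{-\xi}$ as $\xi\to-\infty$ and $U\to0$ as $\xi\to+\infty$. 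The reduced ($\nu=0$) equation $U_\xi=-\xi U-U^3$ is Bernoulli: setting $w=U^{-2}$ gives the linear equation $w_\xi-2\xi w=2$, whose solution matching algebraic decay at $-\infty$ is $w(\xi)=2\mathrm{e}^{\xi^2}\int_{-\infty}^\xi \mathrm{e}^{-s^2}\,\mathrm{d}s$. Hence $U_0:=w^{-1/2}$ is an explicit, strictly decreasing solution of the reduced problem with $U_0\sim\sqrt{-\xi}$ as $\xi\to-\infty$ and Gaussian decay as $\xi\to+\infty$; note $U_0(0)=\pi^{-1/4}$, in agreement with the slope observed numerically in Figure~\ref{f:num}.

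For existence I would construct the true solution as a perturbation $U=U_0+W$ of this profile. Substituting yields $\mathcal M_\nu W=-\nu U_{0,\xi\xi}+N(W)$, where $\mathcal M_\nu W:=\nu W_{\xi\xi}-W_\xi-(\xi+3U_0^2)W$ is the linearization, the residual $\nu U_{0,\xi\xi}$ is $\rmO(\nu)$ in a suitable weighted norm, and $N$ collects the quadratic and cubic terms in $W$. The coefficient $q(\xi):=\xi+3U_0^2$ satisfies $q(\xi)\sim-2\xi>0$ as $\xi\to-\infty$ (since $U_0^2\sim-\xi$) and $q(\xi)\sim\xi>0$ as $\xi\to+\infty$, so the reduced first-order operator $W\mapsto-W_\xi-qW$ is boundedly invertible between appropriate exponentially/algebraically weighted spaces. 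I would upgrade this to a bound on $\mathcal M_\nu^{-1}$ that is uniform as $\nu\to0$ and then solve the fixed-point equation $W=\mathcal M_\nu^{-1}\big(-\nu U_{0,\xi\xi}+N(W)\big)$ by the contraction mapping principle on a small ball, obtaining $\|W\|=\rmO(\nu)$. Since $U_0$ is strictly decreasing with $U_{0,\xi}$ bounded away from $0$ on the relevant region and the correction is $\rmO(\nu)$ in $C^1$, the resulting solution is monotone decreasing; undoing the scaling and invoking Lemmas~\ref{l:asyp} and~\ref{l:asymc} gives the stated asymptotics \eqref{e:lim}.

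Uniqueness I would obtain from the nondegeneracy already available in Lemma~\ref{p:lc}: at any admissible solution the linearization $\mathcal L_c$ has strictly negative spectrum and is in particular invertible, so admissible solutions are isolated. Combining this with an a priori estimate showing that, after rescaling, every admissible solution lies in the $\rmO(\nu)$-neighborhood of $U_0$ (for instance by trapping it between sub- and supersolutions built from $U_0$) forces it to coincide with the solution produced above. Alternatively, the difference $w=u_1-u_2$ of two admissible solutions solves a linear equation whose Schr\"odinger conjugate $\mathrm{e}^{cx/2}w$ lies in $L^2$ and is a zero eigenfunction of $\partial_x^2-\big(x+c^2/4+u_1^2+u_1u_2+u_2^2\big)$; showing this operator is strictly negative, exactly as in the proof of Lemma~\ref{p:lc}, forces $w\equiv0$.

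The main obstacle I anticipate is the uniform-in-$\nu$ control of $\mathcal M_\nu^{-1}$ across the two distinct regimes, namely the first-order (reduced) behavior on compact $\xi$-intervals versus the genuinely second-order far field. The reduced profile $U_0$ decays like $\mathrm{e}^{-\xi^2/2}$, whereas the true solution must decay with the Airy rate of \eqref{e:asyp}; these agree only up to $\xi$ of order $\nu^{-1}$, where $U$ is already exponentially small, so the matching must be carried out in weighted spaces that are insensitive to this far-field discrepancy while still capturing monotonicity. Pinning down these weights, together with the a priori bound needed to close the uniqueness argument, is the technical heart of the proof.
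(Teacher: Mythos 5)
Your reduction is the same one the paper uses, up to a reparametrization: your $\xi=x/\sqrt{-c}$ is the paper's $\xi/\sqrt{\eps}$ after its scaling $y=-cx$, $\eps=-1/c^3$, and your explicit reduced profile $U_0=w^{-1/2}$ with $w_\xi-2\xi w=2$ is exactly the Bernoulli solution \eqref{e:utilde-sec4} (including the value $\pi^{-1/4}$ at the origin). Where you genuinely diverge is in how the reduced solution is promoted to a solution of the full second-order problem. The paper treats the problem as a two-fast/one-slow system, uses Fenichel theory to continue the normally hyperbolic branches $\tilde u=0$ ($\xi>0$) and $\tilde u=\sqrt{-\xi}$ ($\xi<0$), reduces to a two-dimensional center manifold near the origin, and obtains existence \emph{and} uniqueness simultaneously from the transverse intersection of the unstable and stable manifolds in the slow passage through the pitchfork (following Krupa--Szmolyan). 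You instead take $U_0$ as a global approximate solution and run a contraction mapping for $W=U-U_0$ with the singularly perturbed linear operator $\mathcal M_\nu=\nu\partial_\xi^2-\partial_\xi-q$. That route is viable and in one respect cleaner than you suggest: since $U_{0,\xi}=-U_0(\xi+U_0^2)<0$ forces $\xi+U_0^2>0$ everywhere, the coefficient $q=\xi+3U_0^2$ is bounded below by a positive constant, and the elementary identity $\Re\langle\mathcal M_\nu W,W\rangle_{L^2}=-\nu\|W_\xi\|^2-\int qW^2\le -q_{\min}\|W\|^2$ (the transport term integrates to zero) gives invertibility of $\mathcal M_\nu$ on $L^2$ \emph{uniformly} in $\nu$, which largely disposes of the obstacle you flag as the technical heart. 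What your approach buys is an elementary, quantitative existence proof with an explicit leading-order profile; what the paper's buys is uniqueness and the geometric picture (transversality, the fibration structure) that feeds into Sections 4--6.

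Two genuine gaps remain. First, monotonicity: your claim that $U_{0,\xi}$ is ``bounded away from $0$ on the relevant region'' fails in both tails ($U_{0,\xi}\sim-\tfrac12(-\xi)^{-1/2}\to0$ as $\xi\to-\infty$ and $U_{0,\xi}\to0$ Gaussianly as $\xi\to+\infty$), so a $C^1$-small correction does not give monotonicity there; as in the paper's Lemma~\ref{l:ift}, you must get monotonicity in the tails from the asymptotic expansions of Lemmas~\ref{l:asyp} and~\ref{l:asymc} and only use the $C^1$ perturbation argument on a compact interval. This is fixable but must be said. Second, and more seriously, your ``alternative'' uniqueness argument does not close: for $w=u_1-u_2$ the relevant potential is $x+c^2/4+u_1^2+u_1u_2+u_2^2$, and the paper's proof that the ground state of $\widetilde{\mathcal L}_c$ is strictly negative hinges on the identity $\mathcal L_c\partial_xu_0=u_0>0$ paired against the positive ground state; for the averaged potential the analogous computation produces $\mathcal A\,\partial_xu_1=u_1+(2u_1+u_2)(u_1-u_2)\partial_xu_1$, whose sign is not controlled without an a priori ordering of $u_1$ and $u_2$. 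So negativity of the averaged operator is not ``exactly as in the proof of Lemma~\ref{p:lc}.'' Your first uniqueness route (isolatedness from invertibility of $\mathcal L_c$ plus an a priori localization of every admissible solution in an $\rmO(\nu)$-neighborhood of $U_0$) is sound in outline, but the localization step is precisely where the work lies and is only gestured at; the paper avoids this entirely by extracting uniqueness from the transversality of the singular heteroclinic connection.
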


\begin{Proof}
For stationary solutions of \eqref{e:aclin} in the regime $c \ll -1$, we scale $y=-c x, u=-c\tilde{u}, \eps=-1/c^3$ and find
\begin{equation}
    \label{e:cbignegative}
 \tilde{u}_{yy}-\tilde{u}_y-\eps y \tilde{u}-\tilde{u}^3=0.
\end{equation}
With $\xi=\eps y$, so that $\xi$ is a slowly varying dependent variable, the governing equation is equivalent to the autonomous system
\begin{align}
 \tilde{u}_y&=\tilde{v} \nonumber \\
 \tilde{v}_y&=\tilde{v}+\xi\tilde{u}+\tilde{u}^3\\
 \xi_y&=\eps. \nonumber
\end{align}
For the frozen system ($\eps=0$), we find for each $\xi>0$ a unique equilibrium at the origin $(0,0)$ in the $(\tilde{u},\tilde{v})$ plane, and it is a saddle. The union of these saddles over all $\xi > \delta$, for some small $\delta >0$, is a normally  hyperbolic invariant manifold, and it persists for sufficiently  small $0<\eps \ll 1$ by Fenichel theory \cite{fenichel} as a slow invariant manifold,  along which $\xi$ slowly increases. Moreover, one can track solutions along this slow manifold backward into a neighborhood of $\xi=0$. This slow manifold, along with the union of its strong stable fibers, constitutes the stable manifold described in Lemma \ref{e:asyp}.

Then, for each $\xi<0$, there is a unique positive equilibrium given by $\tilde{u}=\sqrt{-\xi}$, which is also a saddle. The union of these saddles over all $\xi<-\delta$ is a normally hyperbolic invariant manifold. This also persists for $0<\eps \ll 1$ by Fenichel theory, and one can track solutions along this slow manifold into the neighborhood of the origin. The union of this slow manifold and its strong unstable fibers constitutes the unstable manifold described in Lemmas \ref{l:asym} and \ref{l:asymc}.

In a vicinity of the origin, there is a two-dimensional center manifold which is the graph of $\tilde{v} = h_c(\tilde{u},\xi,\eps) = -\xi \tilde{u} - \tilde{u}^3 + {\rm h.o.t.}$. The dynamics on this center manifold are given to leading order by
\begin{equation} \label{e:equationonWc}
 \tilde{u}_y+\xi\tilde{u}+\tilde{u}^3=0,\qquad \xi_y=\eps.
\end{equation}
This is a one-fast one-slow system, and it exhibits slow passage through a pitchfork bifurcation. Analysis of the slow passage through this pitchfork, following the general approach of \cite{Krupa_2001}, shows that there is a unique solution with asymptotics $\tilde{u}\sim \sqrt{-\xi}$ for $y\to -\infty$ and $\tilde{u}\to 0$ for $y\to\infty$. This unique solution then lies in the transverse intersection of the unstable manifold at $y = -\infty$ and stable manifold at $y = +\infty$.

Additional information about the leading order asymptotics of this unique solution is obtained as follows. Recall that $\xi=\eps y$, so that system \eqref{e:equationonWc} may be written as a Bernoulli equation with power three: $\eps \tilde{u}_\xi +\xi \tilde{u} +\tilde{u}^3=0.$ Then, set $w=\tilde{u}^{-2}$. The new dependent variable $w$ satisfies the nonautonomous linear equation $\eps w_\xi = 2 \xi w +2$, and the solution of interest is $ w(\xi)= \sqrt{ \frac{\pi}{\eps}}e^{\xi^2/\eps}\left( {\rm erf} \left( \frac{ \xi}{\sqrt{\eps}} \right) + 1 \right)$, where ${\rm erf}(x) = \frac{2}{\sqrt{\pi}} \int_0^x e^{-t^2} dt$. 
(Here, we integrated from $\xi_0$ to $\xi$ and then took the limit as $\xi_0\to -\infty$, noting that ${\rm erf}(\xi_0/\sqrt{\eps}) \to -1$ in this limit and that the homogeneous term vanishes due to the strong decay of $e^{-\xi_0^2/\eps}$.)

Translating this back from $w$ to $\tilde{u}$, one finds
\begin{equation} \label{e:utilde-sec4}
\tilde{u}(\xi;\eps)= \frac{ \eps^{1/4} e^{-\frac{\xi^2}{2\eps}}}{\pi^{1/4} \left( {\rm erf} \left( \frac{\xi}{\sqrt{\eps}}\right) +1
\right)^{1/2}}.
\end{equation}
This solution decreases monotonically in $\xi$.
It has the following asymptotics:
\begin{equation} \label{e:utilde-asymptotics}
\begin{split}
\tilde{u} (\xi;\eps) &{\sim}  0 \ \ \ \ {\rm as} \ \ \xi \to \infty, \\
\tilde{u}(\xi;\eps) &{\sim} \sqrt{-\xi}\left(
1+ \Sigma_{n=1}^{\infty} (-1)^n \frac{ (2n-1)!! \eps^n}{2^n \xi^{2n}}
\right)^{-1/2}
\  \ \ {\rm as } \ \  \xi \to -\infty.
\end{split}
\end{equation}
For the limit $\xi \to \infty$, we used ${\rm erf}(\xi/\sqrt{\eps}) \to 1$; and, for the limit $\xi \to -\infty$, we used ${\rm erf}(-x)=-{\rm erf}(x)$, ${\rm erf}(x) = 1-{\rm erfc}(x)$, and 
${\rm erfc}(x) = \frac{e^{-x^2}}{\sqrt{\pi}x} 
\left[ 
1 + \Sigma_{n=1}^\infty (-1)^n \frac{(2n-1)!!}{2^n x^{2n}} \right]$
as $x \to \infty$.
Hence, for each $c \ll -1$, there exists a monotonically decreasing solution with the asymptotics \eqref{e:lim}, making it an admissible solution.

The touch-down point of the solution, that is, the point where the solution enters a small fixed neighborhood of $\tilde{u}=0$ is readily obtained from scaling,
\[
 \xi=-\eps^{1/2}, \qquad |y|\sim \eps^{-1/2}\sim |c|^{3/2}, \qquad x\sim |c|^{1/2},
\]
where we recall that $\xi=\eps y$ and $y=-cx$.
This completes the proof of the lemma.
\end{Proof}

In the PDE dynamics, the region where the origin is stable expands with speed $|c|$, and the front describes how this ``reverse'' quench annihilates the symmetry-breaking state $u>0$ with a delay $x\sim|c|^{1/2}$.

Also, we observe that by translating formula \eqref{e:utilde-sec4} back to the original $x$ and $u$ variables, one finds that to leading order
\[
u(x;c) = \frac{(-c)^{\frac{1}{4}} e^{\frac{x^2}{2c}}}{{\pi^{\frac{1}{4}} 
\left( \mathrm{erf}\left( \frac{x}{\sqrt{-c}} \right) + 1  
\right)^{\frac{1}{2}}}}.
\]
(Here, we recall $y=-cx$, $\xi=\eps y$, $u=-c \tilde{u}$, and $\eps=-1/c^3$.)
Hence, at the origin, one has to leading order
\begin{equation}\label{e:u(0)-cnegative} 
 u(0;c) = \frac{(-c)^{\frac{1}{4}}}{\pi^{\frac{1}{4}}}.
\end{equation}
This is illustrated in the bottom left panel of Fig. 1.

\section{Existence and qualitative properties for 
\texorpdfstring{$c\gg 1$}{Lg}}
\label{s:4}

In this section, we analyze stationary solutions of \eqref{e:aclin} in the limit $c \gg 1$. This is step (iii) in the outline.
To establish the existence, uniqueness, monotonicity, and asymtptotics of these solutions, we scale $y= cx, u=c\tilde{u},$ and $\eps=1/c^3$ and find 
\begin{equation} \label{e:cpos}
 \tilde{u}_{yy}+\tilde{u}_y-\varepsilon y \tilde{u}-\tilde{u}^3=0.
\end{equation}
The construction of the admissible solution now follows closely the construction used in the proof of Theorem 1.1 in \cite{gksv}, with the value of $c$ there set to one. Let $\xi=\eps y$ and write \eqref{e:cpos} as a system,
\begin{equation*}
    \begin{split}
        \tilde{u}_y &= \tilde{v} \\
        \tilde{v}_y &= - \tilde{v} + \xi\tilde{u} +\tilde{u}^3 \\
        \xi_y &= \eps.
    \end{split}
\end{equation*}
In the singular limit ($\eps=0$), the equilibrium $\tilde{u}=\sqrt{-\xi}$, which exists for each $\xi<0$, is a saddle in the $(\tilde{u},\tilde{v})$ plane, so that their union forms a curve of saddle fixed points. The origin $\tilde{u}=0$ is a stable spiral for $\xi< -1/4$, a stable node for $-1/4 < \xi < 0$, and a saddle for $\xi>0$. At $\xi=-1/4$ to leading order, the solution follows a fast heteroclinic orbit connecting the curve $\tilde{u}=\sqrt{-\xi}$ to the origin. 

To obtain persistence of the unstable manifold coming from $\xi = -\infty$ for $0<\varepsilon\ll1$, we once again compactify in $\xi$ by setting $\beta = (-\xi)^{-1/2},\quad \tilde u = \beta^{-1}\bar u, \quad \tilde v = \beta^{-2} \bar v,\quad \frac{d}{dy} = \beta^{-1}\frac{d}{dz}$ for $\xi<0$, obtaining:
\begin{align}
    \bar u_z &= \bar v + \frac{\varepsilon}{2}\beta^3 \bar u,\\
    \bar v_z &=  - \bar u + \bar u^3-\beta \bar v +\varepsilon \beta^3 \bar u,\\
     \beta_z &= \frac{\varepsilon}{2}\beta^4.
\end{align}
(The variable $\beta$ here is different from that used in Section 2.) 
Now, for $\varepsilon = 0$, the curve of saddle equilibria, $\tilde{u} = \sqrt{-\xi}$ for $\xi<0$, corresponds to $\bar u = 1$ for $\beta\geq0$. We denote the union of this curve with the corresponding one-dimensional fast unstable manifolds as $W_0^u$. Then, for $0<\varepsilon\ll1$, Fenichel theory \cite{fenichel} gives that $W_0^u$ persists as the unstable manifold, $W_\varepsilon^u$, of the equilibrium $(\bar u,\bar v,\beta) = (1,0,0)$.   The curve of equilibria persists as a one-dimensional  slow unstable manifold emanating from the equilibrium, and the union of the one-dimensional fast unstable manifolds for $\varepsilon = 0$ persists as a smooth strong unstable foliation of $W_\varepsilon^u$, with base points on the slow unstable manifold. Having established persistence of the unstable manifold for $0<\varepsilon\ll1$, one can convert back into the $y$-variable, and track its fast fibers into a neighborhood of the origin in the $(\tilde u,\tilde v)$ plane. 

Then, for all $0<\eps \ll 1$, Theorem 1.1 of \cite{gksv} establishes that the front is located at $\eps y = -\frac{1}{4} - \Omega_0 (\tfrac{15}{16})^{2/3} \eps^{2/3} + \mathcal{O}(\eps \ln(\eps))$.
Translating this to the original variables, one finds that the front is located at $x \sim -\frac{c^2}{4} - \Omega_0 (\tfrac{15}{16})^{2/3}$, where here $c \gg 1$. This establishes the first statement in Proposition \ref{p:1} (C). The convergence as $\eps\rightarrow0$ of the slow manifolds onto the critical equilibria $\bar u = 1$ for $\beta\geq0$ and $\bar u = 0$ for $\beta <0$ establishes the second statement in Proposition \ref{p:1} (C).
Finally, by desingularizing the origin, one finds that the critical transition happens at a SNIC (saddle-node on an invariant circle) bifurcation induced by a double eigenvalue in the singular circle. 
Also, this construction of the front solution in the transverse intersection of invariant manifolds for $c\gg 1$ shows that it crosses the curve $\sqrt{-x}$ in a unique point $x_0<0$. In sum, we have the following lemma:
\begin{lemma}\label{l:cggp1}
For each $c \gg 1$, there exists a unique admissible solution $u(x;c)$ of \eqref{e:aclin}.
\end{lemma}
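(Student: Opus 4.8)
The plan is to assemble the lemma directly from the geometric singular perturbation (GSP) analysis already performed in this section, supplying the two points the summary leaves implicit: the verification that the constructed solution is admissible (strictly monotone) and the global uniqueness statement. First, for existence I would record that a solution of \eqref{e:cpos} realizing the limits \eqref{e:lim} is precisely a point of intersection of two one-dimensional invariant manifolds. On the left, the persistent unstable manifold $W^u_\eps$ of the equilibrium $(\bar u,\bar v,\beta)=(1,0,0)$, obtained above via Fenichel theory \cite{fenichel}, converts back to the $y$-variable and, flowing its fast fibers forward, deposits a one-parameter family of orbits into a neighborhood of the origin in the $(\tilde u,\tilde v)$-plane; by Lemma \ref{l:asymc} these are exactly the solutions with the $\sqrt{-x}$ behavior at $-\infty$. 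On the right, the stable manifold $W^\mathrm{s}_+$ of Lemma \ref{l:asyp}, rewritten in scaled variables, is the one-dimensional set of data whose forward orbit decays with the Airy-type tail \eqref{e:asyp}. Theorem 1.1 of \cite{gksv}, applied with its speed set to $1$, produces such an intersection and locates it at $\eps y=-\tfrac14-\Omega_0(\tfrac{15}{16})^{2/3}\eps^{2/3}+\mathcal{O}(\eps\ln\eps)$; translating via $y=cx$, $\eps=1/c^3$ yields the front position and hence a solution with the asymptotics \eqref{e:lim}.

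Second, to promote this solution to an admissible one I must establish $\partial_x u<0$ throughout, equivalently $\tilde v=\tilde u_y<0$ along the entire orbit. I would verify the sign piecewise along the singular skeleton and then invoke persistence for $0<\eps\ll1$: on the slow unstable branch $\tilde u=\sqrt{-\xi}$ the profile strictly decreases in $y$ because $\xi=\eps y$ increases; through the fast layer at $\xi=-\tfrac14$ the connecting orbit solves $\tilde u_{yy}+\tilde u_y=-\tfrac14\tilde u+\tilde u^3$ and joins the saddle $\tilde u=\tfrac12$ to the equilibrium $\tilde u=0$, a monotone invasion front whose velocity-phase-plane heteroclinic keeps $\tilde v<0$; and on the Airy tail $\tilde u\to 0^+$ monotonically. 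Since the sign of $\tilde v$ is uniform and bounded away from $0$ away from the corner points, it persists on the glued orbit, giving $\partial_x u<0$. The single transverse crossing of $\sqrt{-x}$ recorded at the end of the section is consistent with this monotone descent and simultaneously delivers statement (E) of Proposition \ref{p:1} in this regime.

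Third, for uniqueness I would argue that any admissible solution must, by Lemma \ref{l:asymc}, lie on the one-dimensional $W^u_\eps$ at $-\infty$ and, by Lemma \ref{l:asyp}, on the one-dimensional $W^\mathrm{s}_+$ at $+\infty$; thus it is an intersection point of these two curves in the $(u,u_x)$ cross-section at any fixed $x=x_0$. The GSP/\cite{gksv} analysis shows this intersection is \emph{transverse} and consists of a single point for all sufficiently small $\eps$, which rules out a second admissible solution and pins down uniqueness.

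The main obstacle is not existence, which is essentially imported from \cite{gksv}, but two finer points: making the monotonicity genuinely global and uniform in $\eps$ across the fast layer, and making uniqueness global rather than merely local, i.e.\ excluding a non-transverse second intersection. Both hinge on the transversality of $W^u_\eps\cap W^\mathrm{s}_+$, which is also what underwrites the front-location estimate; this is the linchpin and the one place where the detailed exchange-lemma and blow-up bookkeeping of \cite{gksv} is genuinely required rather than summarized.
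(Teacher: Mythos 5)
Your proposal follows essentially the same route as the paper: rescale to the slow--fast system, apply Fenichel theory after compactifying at $\xi=-\infty$, import the front construction and location from Theorem 1.1 of \cite{gksv} with the speed there set to one, and obtain uniqueness from the transversality of the intersection of the one-dimensional stable and unstable manifolds. The only real difference is that you spell out the piecewise monotonicity check along the singular skeleton (slow branch $\tilde u=\sqrt{-\xi}$, critical monotone layer front at $\xi=-1/4$, Airy tail), a verification the paper leaves implicit in its appeal to the construction of \cite{gksv}.
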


\medskip

\section{Existence and monotonicity for all 
{\it c}: continuation analysis}\label{s:5}

Having established existence and uniqueness of admissible solutions for large $|c|$ in Lemmas~\ref{l:cllm1} and \ref{l:cggp1}, we now address steps (iv)--(vi) of the proof of Theorem \ref{t:1} and Proposition \ref{p:1}.

\begin{lemma}[Existence for open sets of $c$]\label{l:ift}
 The set of $c$ such that there exists an admissible solution to  \eqref{e:aclin} is open. Moreover, near each $c_0$ with an admissible solution, there exist a $\delta(c_0)>0$ and a family of admissible solutions that is smooth in $c$ for $|c-c_0|<\delta(c_0)$.
\end{lemma}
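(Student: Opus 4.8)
The plan is to obtain the family by the Implicit Function Theorem applied to the stationary equation, using the invertibility of the linearization established in Lemma~\ref{p:lc} as the essential ingredient. Fix $c_0$ with an admissible solution $u_0=u_*(\cdot;c_0)$ and write $F(u,c):=u_{xx}+cu_x-xu-u^3$, so that stationary solutions of \eqref{e:aclin} are the zeros of $F$. A direct computation gives $D_uF(u_0,c_0)=\mathcal{L}_{c_0}$, the operator \eqref{e:opL} linearized at $u_0$, which by Lemma~\ref{p:lc} is a bounded bijection from $H^2(\R)\cap L^2_1(\R)$ onto $L^2(\R)$. If $F$ were a well-defined $C^1$ map into $L^2$ near $(u_0,c_0)$, the IFT would immediately produce a locally unique branch $c\mapsto u(\cdot;c)$, smooth in $c$, with $u(\cdot;c_0)=u_0$.

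The one genuine obstacle is the far field at $x\to-\infty$: since every solution satisfies $u\sim\sqrt{-x}$ there, we have $\partial_cF(u_0,c_0)=u_{0,x}\sim-\tfrac12(-x)^{-1/2}$, which is \emph{not} in $L^2$; equivalently $F(u_0,c)=(c-c_0)u_{0,x}\notin L^2$ for $c\ne c_0$, so the naive $L^2$ formulation is not even well-posed. The resolution is to peel off a $c$-dependent far-field corrector. Using the asymptotics of Lemma~\ref{l:asymc}, I will construct an explicit $\phi(\cdot;c)$, smooth in $c$ with $\phi(\cdot;c_0)=u_0$, that reproduces the algebraic far field $(-x)^{1/2}\bigl(1+\tfrac{c}{2\sqrt2\,x^2}+\cdots\bigr)$ to high enough order that the residual $R(\cdot;c):=F(\phi(\cdot;c),c)$ lies in $L^2$ and depends smoothly on $c$; the point is precisely that the carefully chosen correction cancels the troublesome $c\,\phi_x\sim-\tfrac{c}{2}(-x)^{-1/2}$ term. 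Seeking $u=\phi(\cdot;c)+w$ with $w\in H^2\cap L^2_1$, the equation becomes $G(w,c):=R(\cdot;c)+\mathcal{L}_c^{\phi}w-3\phi w^2-w^3=0$, where $\mathcal{L}_c^\phi$ is the linearization at $\phi$. Standard Nemytskii estimates, using $\phi^2\lesssim(1+|x|)$ and $H^2\hookrightarrow L^\infty$, show $G$ is a smooth map $(H^2\cap L^2_1)\times\R\to L^2$, and $D_wG(0,c_0)=\mathcal{L}_{c_0}$ is invertible by Lemma~\ref{p:lc}. The IFT then yields a smooth branch $c\mapsto w(\cdot;c)$, hence $u(\cdot;c)=\phi(\cdot;c)+w(\cdot;c)$, whose prescribed limits \eqref{e:lim} are built into the construction.

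It remains to verify that $u(\cdot;c)$ is \emph{admissible}, i.e. that $\partial_x u(\cdot;c)<0$ on all of $\R$ for $c$ near $c_0$; this delivers both openness and the smooth family. I split $\R$ into a large compact interval $[-R,R]$ and its two tails. On $[-R,R]$ the embedding $H^2\hookrightarrow C^1$ and smallness of $w$ give $\partial_x u(\cdot;c)\to\partial_x u_0$ in $C^0([-R,R])$; since $\partial_x u_0\le-\eta<0$ there, negativity persists for $c$ close to $c_0$. In the tails I invoke the manifold asymptotics: near $+\infty$ the solution lies on $W^\mathrm{s}_+(x_0;c)$ and Lemma~\ref{l:asyp} gives $u\sim\alpha_+(c)\,x^{-1/4}e^{-\frac23(x+c^2/4)^{3/2}-\frac{c}2x}$ with $\alpha_+(c)>0$ by continuity from $\alpha_+(c_0)>0$, forcing $\partial_x u<0$; near $-\infty$ the solution lies on $W^\mathrm{u}_-(x_0;c)$ and Lemma~\ref{l:asymc} gives $\partial_x u\sim-\tfrac12(-x)^{-1/2}<0$. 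Choosing $R$ large and then $c$ sufficiently close to $c_0$ makes all three regions agree in sign, so $u(\cdot;c)$ is admissible.

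I expect the far-field corrector construction of the second paragraph to be the main technical obstacle: one must carry the expansion of Lemma~\ref{l:asymc} to sufficient order, patch it smoothly to $u_0$ on compact sets via a cutoff, and check that $R(\cdot;c)$ together with its $c$-derivatives land in $L^2$ uniformly near $c_0$. An alternative that avoids the corrector is to run the IFT directly at $u_0$ in algebraically weighted spaces $L^2_s$ (with $s<0$ at $-\infty$), chosen so that $u_{0,x}$ lies in the target while $\partial_c u_0\sim(-x)^{-3/2}$ lies in the domain; here the confining potential $V\sim-2x$ of Lemma~\ref{p:lc} dominates the weight, so $\mathcal{L}_{c_0}$ retains compact resolvent and strictly negative spectrum and hence stays invertible between the weighted spaces. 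Either route reduces the lemma to the invertibility already proved in Lemma~\ref{p:lc} together with the asymptotics of Section~\ref{s:asy}.
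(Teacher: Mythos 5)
Your proposal is correct and follows the same overall strategy as the paper: reduce to the implicit function theorem using the invertibility of $\mathcal{L}_{c_0}$ from Lemma~\ref{p:lc}, then upgrade the resulting branch to admissibility by combining $C^1$ convergence on compact intervals with the uniform monotonicity of the tail asymptotics from Lemmas~\ref{l:asyp} and \ref{l:asymc}. The one place you genuinely diverge is in handling the non-square-integrable inhomogeneity $\partial_c F(u_0,c_0)=u_{0,x}\sim-\tfrac12(-x)^{-1/2}$: you build a $c$-dependent far-field corrector $\phi(\cdot;c)$ (or pass to algebraically weighted spaces) so that the residual lands in $L^2$, whereas the paper instead writes the perturbation equation for $w=u-u_0$, precomposes with $\mathcal{L}_{c_0}^{-1}$, and poses the fixed-point problem $w+\mathcal{L}_{c_0}^{-1}\bigl((c-c_0)w_x+(c-c_0)u_{0,x}-3u_0w^2-w^3\bigr)=0$ directly on $H^1(\R)$, relying on the fact that the Green's function of the confining operator maps the slowly decaying $u_{0,x}$ to a decaying $H^1$ function (morally $\mathcal{L}_{c_0}^{-1}u_{0,x}\sim(-x)^{-3/2}$, consistent with $\partial_c u_*$). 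Your route is more explicit about why the functional setting is well posed, at the cost of the corrector construction; the paper's route is shorter but leaves the mapping property of $\mathcal{L}_{c_0}^{-1}$ on the non-$L^2$ term implicit. Both reduce to the same two ingredients, Lemma~\ref{p:lc} and the Section~\ref{s:asy} asymptotics, and your verification of monotonicity (including excluding $\alpha_+=0$ at $+\infty$) matches the paper's.
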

\begin{Proof}
Let $u_0$ be the admissible solution at $c=c_0$. We set $u=u_0+w$ for a solution at
a nearby value $c$ and find the equation
\begin{equation}
 F_0(w;c)=\mathcal{L}_{c_0} w + (c-c_0)w_x + (c-c_0)u_{0,x} - 3u_0 w^2-w^3=0.
\end{equation}
We recall that the linear operator is defined by \eqref{e:opL} and its spectrum characterized in Lemma \ref{p:lc}.
We then invert $\mathcal{L}_{c_0}$ and find
 \begin{equation}
 F(w;c)=w + \mathcal{L}_{c_0}^{-1}\left( (c-c_0)w_x + (c-c_0)u_{0,x} - 3u_0 w^2-w^3\right)=0,
 \end{equation}
which we consider as an equation on $H^1$. By definition, $F(0;c_0)=0$, and $\partial_wF(0;c_0)$ is bounded invertible. Moreover, since $\mathcal{D}(\mathcal{L}_c) \subset H^1$ , $\mathcal{L}_c^{-1}$ is bounded from $L^2\to H^1$, and we readily find that $F:H^1(\R)\times\R\to H^1(\R)$ is smooth. The implicit function theorem then gives the desired family of solutions with the prescribed limits. 

It remains to verify that for $|c-c_0|$ sufficiently small, the solution is monotone. Monotonicity in compact intervals follows from continuity in $c$ and the fact that $w_x<0$. %Monotonicity as $x\to \pm\infty$ follows from the fact that continuity in compact intervals implies continuity of the coefficients in the asymptotic expansion of the solution which is monotone. 
For monotonicity as $x\to\pm\infty$, first recall that the asymptotic stable (Lem. \ref{l:asyp}) and unstable manifolds (Lem. \ref{l:asym} or \ref{l:asymc}), are smooth in the parameter $c$ and solutions contained in them have asymptotic expansions which also depend smoothly on $c$ and are monotonic in $x$ for each fixed $c$. For admissible solutions considered here, continuity in $c$ in compact $x$-intervals thus implies continuity of the coefficients in their asymptotic expansions as follows. The asymptotic expansions guarantee that for any $\alpha_-\in\R$ and $\alpha_+>0$, there are neighborhoods of infinity, $x<-M$ and $x>M$, respectively, uniform in $|\alpha-\alpha_\pm|<\delta$ for some $\delta>0$ so that $u$ is strictly decreasing. Since also $\alpha_+= 0$ implies $u(x)=0$ for all $x$ and is hence excluded,  we may conclude monotonicity for all $c$ near $c_0$.
\end{Proof}

% Suppose that there is a sequence $c_k\to c_0$ and locations $|x_k|\to\infty$ so that $u_x(x_k;c _k)=0$. One immediately obtains a contradition when $x_k\to+\infty$ since non-monotonicity would imply that $u$ possesses negative value from the expansion. Similarly, if $x_k\to-\infty$, we find that necessarily $\alpha_-\to\infty$, contradicting continuity of $\alpha_-$. 

\begin{lemma}[Monotonicity in $c$]\label{l:m}
  For any  family of admissible solutions $u(x;c)$ of \eqref{e:aclin}, we have $\partial_c u(x;c)<0$ for all $x,c\in\R$.
\end{lemma}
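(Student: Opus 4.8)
The plan is to differentiate the steady-state equation with respect to $c$ and reduce the claim to a sign-definiteness statement about a solution of a linear equation governed by the operator $\mathcal{L}_c$, whose spectral properties we already control from Lemma~\ref{p:lc}. Writing $w:=\partial_c u$ and differentiating $u_{xx}+cu_x-xu-u^3=0$ in $c$ gives
\begin{equation}
\mathcal{L}_c w = -u_x,
\end{equation}
so that $w=-\mathcal{L}_c^{-1}u_x$. Since $u$ is admissible, $u_x<0$, hence the right-hand side $-u_x>0$ is strictly positive. The goal is therefore to show that applying $\mathcal{L}_c^{-1}$ to a strictly positive function yields a strictly positive function, i.e.\ that $-\mathcal{L}_c^{-1}$ is positivity-preserving (indeed positivity-improving). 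Equivalently, I want to show that the unique decaying solution $w$ of $\mathcal{L}_c w=-u_x$ satisfies $w>0$ everywhere.

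First I would record that $\mathcal{L}_c$ has strictly negative spectrum (Lemma~\ref{p:lc}), so $0$ is in the resolvent set and $w=-\mathcal{L}_c^{-1}u_x$ is the unique solution in the domain $H^2\cap L^2_1$; in particular $w$ decays at $\pm\infty$. The cleanest route to the sign is a maximum-principle argument. The conjugated operator $\widetilde{\mathcal{L}}_c=e^{cx/2}\mathcal{L}_c e^{-cx/2}=\partial_x^2-V(x)$ with $V(x)=x+c^2/4+3u_0^2$ has $V(x)\to+\infty$ as $|x|\to\infty$, and its spectrum is strictly negative, so $-\widetilde{\mathcal{L}}_c$ is a positive operator with potential bounded below. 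Setting $\widetilde{w}=e^{cx/2}w$, the equation becomes $-\widetilde{\mathcal{L}}_c\widetilde{w}=e^{cx/2}(-u_x)>0$. I would then invoke that for a Schr\"odinger operator $-\partial_x^2+V$ which is positive (equivalently, whose ground-state eigenvalue is positive, here $-\lambda_0>0$) and whose ground state $\phi_0$ is strictly positive, the inverse is positivity-preserving. Concretely, $-\widetilde{\mathcal{L}}_c\widetilde{w}\geq 0$ together with decay of $\widetilde w$ forces $\widetilde{w}\geq 0$ by the maximum principle: at any interior negative minimum of $\widetilde w$ one has $\widetilde{w}_{xx}\geq 0$ and $V\widetilde w<0$ giving $-\widetilde{\mathcal{L}}_c\widetilde w=-\widetilde w_{xx}+V\widetilde w<0$, a contradiction; decay handles the behavior at infinity. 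Strict positivity $\widetilde w>0$, hence $w>0$, then follows from the strong maximum principle since the right-hand side is not identically zero. Translating back through $w=e^{-cx/2}\widetilde w$ preserves the sign, giving $\partial_c u=w>0$.

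This contradicts the desired sign; the resolution is a sign bookkeeping I must get right. The statement claims $\partial_c u<0$, so the relevant equation is $\mathcal{L}_c(\partial_c u)=-u_x>0$ and what the maximum principle actually yields is that $-\mathcal{L}_c^{-1}$ maps positive functions to positive functions, i.e.\ $\partial_c u=-(-\mathcal{L}_c^{-1})(-u_x)$; tracking the two minus signs carefully (one from solving, one from the inhomogeneity) gives $\partial_c u<0$. The main obstacle I anticipate is precisely this careful sign/positivity bookkeeping together with a clean justification of the maximum principle in the unbounded-domain, weighted setting: one must verify that $\widetilde w$ genuinely decays (so no minimum escapes to infinity) and that $V\to+\infty$ legitimizes the interior-extremum argument without appealing to boundedness of the domain. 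An alternative, perhaps more robust, route avoids the pointwise maximum principle entirely: expand $\widetilde w$ in the eigenbasis $\{\phi_j\}$ of $\widetilde{\mathcal{L}}_c$ and use that the ground state $\phi_0>0$ dominates, but this only gives the sign near the dominant tail and not globally, so I expect the maximum-principle argument to be the one to push through. A useful consistency check is $c\to\pm\infty$: the asymptotic formulas from Sections~\ref{s:3}--\ref{s:4} (e.g.\ $u(0;c)\sim(-c)^{1/4}/\pi^{1/4}$) should exhibit the predicted monotone dependence on $c$.
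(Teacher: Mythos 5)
Your overall strategy coincides with the paper's: differentiate the profile equation in $c$ to get $\mathcal{L}_c\,\partial_c u=-u_x$, use Lemma~\ref{p:lc} to invert, and reduce the claim to the statement that $(-\mathcal{L}_c)^{-1}$ maps positive functions to positive functions. Your final sign bookkeeping is also correct (the intermediate line ``$-\widetilde{\mathcal{L}}_c\widetilde{w}=e^{cx/2}(-u_x)>0$'' should read $\widetilde{\mathcal{L}}_c\widetilde{w}=e^{cx/2}(-u_x)>0$, but you repair this at the end). The genuine gap is in the justification of positivity of the inverse. Your interior-extremum argument uses, at a putative negative minimum of $\widetilde w$, that $V\widetilde w<0$, which requires the \emph{pointwise} inequality $V(x)>0$. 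But $V(x)=x+c^2/4+3u_0^2$ is only known to be positive outside a compact neighborhood of the origin (as noted in the proof of Lemma~\ref{p:lc}); what Lemma~\ref{p:lc} gives you is positivity of the operator $-\widetilde{\mathcal{L}}_c$ in the spectral (quadratic-form) sense, i.e.\ $-\lambda_0>0$, and that does not by itself license the naive maximum principle. At a point where $V<0$, a negative minimum of $\widetilde w$ produces no contradiction. So the step ``positivity of the ground-state energy $\Rightarrow$ interior-extremum argument goes through'' is exactly the missing link.

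Two standard repairs are available. One is the generalized (ground-state-transformed) maximum principle: since the ground state $\phi_0>0$ of $\widetilde{\mathcal{L}}_c$ satisfies $-\phi_0''+V\phi_0=-\lambda_0\phi_0>0$, it is a strict positive supersolution; writing $\widetilde w=\phi_0 h$ converts the equation into one for $h$ with a zeroth-order coefficient of favorable sign, to which your extremum argument then applies (modulo the decay of $\widetilde w=e^{cx/2}w$ at $x\to+\infty$, which you correctly flag as needing verification and which follows from the super-exponential decay of $u_x$). The other is the route the paper actually takes: $-\mathcal{L}_c$ is self-adjoint in the weighted space, the semigroup $e^{\mathcal{L}_c t}$ is positivity preserving, and since the spectrum is strictly negative the resolvent $(-\mathcal{L}_c)^{-1}=\int_0^\infty e^{\mathcal{L}_c t}\,\rmd t$ has a positive Green's function (Theorem XIII.44 of \cite{ReedSimonIV}); then $\partial_c u=\int_\R K_c(x,y)\,\partial_x u(y)\,\rmd y<0$ directly. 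Either fix closes your argument; as written, the maximum-principle step does not.
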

\begin{Proof}
 %From the proof of Lemma \ref{l:ift}, 
 By taking the derivative of the equation for stationary solutions of \eqref{e:aclin} with respect to $c$, one finds $\mathcal{L}_c \partial_c u = - u_x.$ Hence, 
 \[
  \partial_cu=-\mathcal{L}_c^{-1}\partial_x u.
 \]
Now, since the Sturm-Liouville operator $-\mathcal{L}_c$ is strictly positive, it is also resolvent positive, that is, the associated Green's function $K_c(x,y)$ is positive. In particular, we observe that  $-\mathcal{L}_c$ is self-adjoint in a weighted $L^2$ space with weight $w(x)=e^{c x/2}$ (recall that $f\in L^2(w)$ if $wf \in L^2$), with positive ground-state eigenvalue. Also, the evolution $e^{-\mathcal{L}_c t}$ is positivity preserving for all $c$. Hence,  Theorem XIII.44 of \cite{ReedSimonIV} gives positivity of the resolvent. 

In turn, this establishes that, for all $c$,
\[
 \partial_cu(x;c)=\int_\R K_c(x,y)\partial_xu(y)\rmd y<0,
\]
completing the proof of the lemma.
\end{Proof}

\begin{lemma}[Existence for closed sets of $c$]\label{l:c}
The set of values of $c$ for which there exists an admissible solution is closed.
\end{lemma}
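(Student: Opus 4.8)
The plan is to take a sequence $c_n \to c_*$ with admissible solutions $u_n := u_*(\cdot; c_n)$ and to extract a limit that is itself an admissible solution at $c_*$. The strategy has three pieces: (1) uniform a priori bounds on the family $u_n$ that are independent of $n$; (2) a compactness argument producing a locally uniform limit $u_*$ solving \eqref{e:aclin} at $c_*$; (3) verification that $u_*$ inherits the correct limits \eqref{e:lim} and strict monotonicity, i.e. is genuinely admissible.

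\textbf{A priori bounds.} First I would establish that any admissible solution is trapped between explicit barriers uniformly in $c$ on compact $c$-intervals. Since $c_n \to c_*$ we may assume $c_n \in [c_* - 1, c_* + 1]$. An upper bound comes from the fact that $u \equiv 0$ is a subsolution on the region where $x>0$ and the asymptotics at $+\infty$ from Lemma \ref{l:asyp} force decay; for the lower region, the steady equation $u_{xx} + c u_x = xu + u^3$ together with $u>0$, $u_x<0$ gives that where $x<0$ the solution cannot exceed the outer branch $\sqrt{-x}$ by more than a controlled amount (the asymptotics \eqref{e:asym-cnotzero} pin $u \sim \sqrt{-x}$). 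Concretely I would show $0 < u_n(x) \le \sqrt{(-x)_+} + C$ with $C$ independent of $n$, using a comparison/maximum principle argument on the second-order ODE; the monotonicity $\partial_x u_n < 0$ then also bounds $\|u_n\|_{L^\infty}$ on any half-line $x \ge -M$. Interior elliptic (here ODE) estimates, or simply rewriting as the first-order system $(u, u_x)$ and bounding $u_x$ via the equation, give uniform $C^1_{\mathrm{loc}}$, hence $C^2_{\mathrm{loc}}$, bounds.

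\textbf{Compactness and passage to the limit.} With uniform $C^2_{\mathrm{loc}}$ bounds, Arzel\`a--Ascoli (applied to $(u_n, u_{n,x})$ on an exhaustion of $\R$ by compact intervals, with a diagonal extraction) yields a subsequence converging in $C^1_{\mathrm{loc}}$ to some $u_*$, and passing to the limit in the ODE shows $u_*$ solves \eqref{e:aclin} at $c = c_*$ and satisfies $u_{*,x} \le 0$ everywhere, $u_* \ge 0$.

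\textbf{Recovering admissibility.} This is the step I expect to be the main obstacle, since local convergence alone does not control the behavior at $x = \pm\infty$, and a priori the limit could degenerate to $u_* \equiv 0$ or lose strict monotonicity. To rule out $u_* \equiv 0$ and to fix the limit at $-\infty$, I would use the invariant-manifold description: each $u_n$ lies in the intersection $W^{\mathrm u}_-(x_0; c_n) \cap W^{\mathrm s}_+(x_0; c_n)$, and these manifolds depend smoothly (hence continuously) on $c$ by Lemmas \ref{l:asyp}, \ref{l:asym}, \ref{l:asymc}, with uniform asymptotic expansions on $\{x < -M\}$ and $\{x > M\}$. Thus the coefficients $\alpha_\pm^{(n)}$ stay in a bounded set and the tails of $u_n$ are controlled uniformly in $n$; taking the limit, $u_*$ still satisfies \eqref{e:lim}. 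The strictness $\partial_x u_* < 0$ is where the \emph{second-order structure} is essential: $u_*$ is not merely monotone but solves a second-order equation, so if $u_{*,x}(x_1) = 0$ at some point then, differentiating \eqref{e:aclin}, $w := u_{*,x}$ solves the linear equation $\mathcal{L}_{c_*} w = u_*$ read pointwise, i.e. $w_{xx} + c_* w_x - (x + 3u_*^2) w = u_* > 0$; a sign/uniqueness argument (a zero of $w$ with $w \le 0$ would force, via this inhomogeneous equation and $u_* > 0$, that $w$ cannot remain $\le 0$, contradicting monotonicity of the limit) upgrades $u_{*,x} \le 0$ to $u_{*,x} < 0$ everywhere. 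This rules out the degenerate limit and shows $u_*$ is admissible, so $c_* $ belongs to the set, proving it closed.
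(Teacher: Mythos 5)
Your overall skeleton (uniform bounds, local compactness via Arzel\`a--Ascoli, then recovering admissibility of the limit) matches the paper's, and your argument for upgrading $\partial_x u_*\le 0$ to $\partial_x u_*<0$ is essentially the paper's use of the second-order structure: at an interior zero $x_1$ of $v:=-\partial_x u_*\ge 0$ one has $v_x(x_1)=0$ and, from the differentiated equation, $v_{xx}(x_1)=-u_*(x_1)<0$, contradicting the minimum. That step is fine \emph{provided} $u_*(x_1)>0$, which brings us to the real issue.

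The genuine gap is in how you pin down the behavior of the limit at $x=\pm\infty$, i.e., in ruling out that the front interface escapes along the sequence. Your a priori bounds are effectively one-sided: positivity together with $u_n\le\sqrt{(-x)_+}+C$ is perfectly consistent with $u_n\to 0$ locally uniformly (the front position drifting to $-\infty$ as $n\to\infty$), in which case the limit is $u_*\equiv 0$ and nothing is proved. Your proposed remedy --- that each $u_n$ lies in $W^{\mathrm{u}}_-\cap W^{\mathrm{s}}_+$, these manifolds depend continuously on $c$, and the coefficients $\alpha_\pm^{(n)}$ stay bounded --- does not close this: boundedness of $\alpha_+^{(n)}$ allows $\alpha_+^{(n)}\to 0$, which is exactly the degenerate limit, and membership in $W^{\mathrm{u}}_-$ is an asymptotic property not automatically inherited under locally uniform convergence (the points $(u_n(x_0),u_{n,x}(x_0))$ may converge to a point in the closure of the one-dimensional manifold that is not on it). What is missing is a two-sided, $n$-uniform squeeze, and this is precisely where the paper invokes Lemma \ref{l:m}: monotonicity in $c$ gives $u(x;-M)>u(x;c_k)>u(x;M)$ for a fixed large $M$, with both barriers themselves admissible solutions. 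This single observation supplies the uniform bounds, prevents the interface from escaping in either direction, and forces the limit $\bar u$ to satisfy \eqref{e:lim} because it is trapped between two functions with exactly those asymptotics. You should insert this squeeze (or an equivalent uniform lower bound $u_n(x_1)\ge\delta>0$ at some fixed $x_1$, together with a genuine uniform upper bound near $x=-\infty$) before the compactness step; the rest of your outline then goes through essentially as in the paper.
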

\begin{Proof}
 Given a sequence $\{ c_k \}$ for which $c_k\to c_\infty$ for some $c_\infty\in \mathbb{R}$ and given an admissible solution $u(x;c_k)$ for each $c_k$, we wish to extract a convergent subsequence. Therefore, notice that $u(x;-M)>u(x;c_k)>u(x;M)$ for $M$ sufficiently large. Regularity then immediately gives compactness in the local topology, so that there exists a solution $\bar{u}(x)$ on which the sequence limits, $u(x;c_{k_\ell})\to \bar{u}(x)$, locally uniformly. As a consequence, $\bar{u}(x)$ is a stationary solution of \eqref{e:aclin} at $c=c_\infty$. We need to show that it is in fact admissible.

First, we observe that, by monotonicity in $c$ (recall Lemma \ref{l:m}), there exists an $M>0$ sufficiently large and admissible solutions $u(x;-M)$ and $u(x;M)$ such that
$u(x;-M)>\bar{u}(x)>u(x;M)$ for all $x$. This guarantees that $\bar{u}(x)$ has the asymptotic behavior required of an admissible solution.

To show monotonicity in $x$, namely $\partial_x\bar{u}(x)<0$, we argue by contradiction. That is, we assume there exists an $x_0$ at which $\partial_x\bar{u}(x_0)=0$; this suffices due to the asymptotics as $x \to \pm \infty$. Since the solutions $u(x;c_{k_\ell})$ in the approximating sequence are monotone, it must be that $\partial_{xx}\bar{u}(x_0)=0$ and $\partial_{xxx}\bar{u}(x_0)> 0$ (where the latter is derived by taking the derivative of equation \eqref{e:aclin}). Substituting the two equalities into the equation for stationary solutions of \eqref{e:aclin}, we find that $x_0\bar{u}(x_0)+\bar{u}^3(x_0)=0$, which implies $x_0<0$. Furthermore, the inequality implies that $\partial_{xx} \bar{u}>0$ for $x> x_0$ and close to $x_0$. 

Then, recalling the asymptotics as $x \to \infty$, it must be that $\bar{u}$ has a maximum on the interval $x>x_0$ and that the value of $\bar{u}$ at the maximum is such that $\bar{x}\bar{u}+\bar{u}^3>0$. However, this gives a contradiction, by the maximum principle. Hence, there cannot be such a point $x_0$, and we have shown that $\partial_x \bar{u}(x) < 0 $ for all $x$.
This completes the demonstration that $\bar{u}$ is admissible.
\end{Proof}

We are now ready to complete the proof of our main result.

\begin{Proof}[ of Theorem \ref{t:1}]
The set of $c\subset \R$ such that there exists an admissible solution is open and closed by Lemmas \ref{l:ift} and \ref{l:c}. It is also nonempty since it includes values of $|c|$ sufficiently large, by Lemmas \ref{l:cllm1} and \ref{l:cggp1}, and since it includes the Hastings-McLeod solution for $c=0$. Therefore, admissible solutions exist for all $c\in\R$, because if solutions were to cease being admissible for some finite value of $c$ then the set could not be both open and closed.

It remains to show uniqueness. Lemmas \ref{l:cllm1} and \ref{l:cggp1} establish uniqueness in the limits $|c|\gg 1$. Now, if there is more than one admissible solution for some finite value of $c$, then Lemma \ref{l:ift} guarantees that the set of such values is open. Moreover, Lemma \ref{l:c} would show that each of the solution branches can be continued to a solution branch on all of $\R$. However, since solution branches are isolated by Lemma \ref{l:ift}, this would imply the existence of multiple branches for large $|c|$, which is a contradiction. Hence, for each $c$ the solution is unique.
\end{Proof}

The proofs above establish Theorem \ref{t:1}, as well as properties (A)--(D) of Proposition \ref{p:1}. 
It remains to establish property (E).
This is done in the following lemma.
We observe that this unique intersection property for all $c\ge 0$ is the natural extension of the known result for $c=0$, where the Hastings-McLeod solution $u_{\rm HM}=u_*(x;0)$ is known (see \cite{hastings80}) to intersect the curve $u=\sqrt{-x}$ in a unique point on $(-\infty,0)$. Moreover, this point is the unique inflection point of the Hastings-McLeod solution. Hence, it is useful to establish the same unique intersection point property for the admissible solutions of the Painlev\'e-II equation with drift term, {\it i.e.,} for the fronts of \eqref{e:aclin} for any $c\ge 0$.

\begin{lemma}\label{l:cross}
 For each $c\ge 0$, the admissible solution $u$ possesses precisely one value $x_0<0$ such that $x_0u(x_0)+u(x_0)^3=0$.
\end{lemma}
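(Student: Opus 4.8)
The plan is to reduce the statement to a sign-counting problem for the single scalar function $h(x):=x+u(x)^2$. Since an admissible solution has $u>0$ everywhere, the identity $x u+u^3=u\,h$ shows that the zeros of $x u+u^3$ coincide exactly with the zeros of $h$. Moreover, for $x\ge 0$ we have $h\ge u^2>0$, so every zero of $h$ automatically lies in $(-\infty,0)$; it therefore suffices to prove that $h$ has exactly one zero on $\R$.

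The key observation is that $h$ itself solves a favorable linear equation. Differentiating $h$ twice and eliminating $u''$ via the steady-state equation \eqref{e:aclin} (so that $u''=xu+u^3-cu'$), a short computation gives
\begin{equation*}
 \mathcal{M}h:=h''+c\,h'-2u^2 h = 2(u')^2 + c.
\end{equation*}
The right-hand side is strictly positive precisely because $c\ge 0$ and $u'<0$ by admissibility; this is where the hypothesis $c\ge0$ is used, and is the reason the statement is restricted to nonnegative drift. The operator $\mathcal M$ has nonpositive zeroth-order coefficient $-2u^2<0$, so it obeys the maximum principle. Concretely, at any interior local maximum $x^*$ of $h$ one has $h'(x^*)=0$ and $h''(x^*)\le0$, whence $0<\mathcal M h(x^*)=h''(x^*)-2u(x^*)^2h(x^*)\le -2u(x^*)^2 h(x^*)$, forcing $h(x^*)<0$. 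Thus $h$ can have no local maximum at which $h\ge0$.

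Next I would pin down the boundary signs and conclude. At $x=0$ we have $h(0)=u(0)^2>0$, and in fact $h>0$ on all of $[0,\infty)$. As $x\to-\infty$, the expansions of Lemmas \ref{l:asym} and \ref{l:asymc} give $u(x)<\sqrt{-x}$ for $x$ sufficiently negative (the leading algebraic correction to $\sqrt{-x}$ is negative for every $c\ge0$), equivalently $h(x)<0$ there; one may alternatively read this sign directly from $\mathcal M h=2(u')^2+c>0$, since $h\to0$ while $u^2\to\infty$ forces the dominant balance $-2u^2h\sim 2(u')^2+c>0$. In particular $h$ has at least one zero by the intermediate value theorem. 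Now consider the open set $\Omega=\{x:h(x)>0\}$. Any bounded component $(a,b)$ of $\Omega$ would have $h(a)=h(b)=0$ with $h>0$ inside, hence an interior positive maximum, contradicting the maximum-principle step; and $\Omega$ has no left-unbounded component since $h<0$ for $x\ll0$. As $\Omega\supseteq[0,\infty)$, it must be a single right-unbounded interval $\Omega=(\gamma,\infty)$ with $\gamma<0$ (strict, since $h(0)>0$) and $h(\gamma)=0$. Finally, on $(-\infty,\gamma)$ any interior zero of $h$ would be a local maximum with value $0$, again excluded; hence $h<0$ strictly there, and $\gamma$ is the unique zero, giving the desired unique $x_0=\gamma<0$.

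The main obstacle I anticipate is establishing the correct sign of $h$ at $-\infty$ uniformly in $c\ge0$: the whole argument hinges on $u$ approaching $\sqrt{-x}$ \emph{from below}, and this requires extracting the sign of the leading algebraic correction in Lemmas \ref{l:asym}--\ref{l:asymc} rather than merely using the limit \eqref{e:lim}. Once that sign is secured, the derivation of $\mathcal M h$ and the maximum-principle bookkeeping are routine.
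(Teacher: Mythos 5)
Your argument is correct, and it is genuinely different from the paper's. The paper proves Lemma \ref{l:cross} by a continuation-in-$c$ argument: it takes the known unique crossing for $c=0$ (Hastings--McLeod) and for $c\gg1$ (the geometric construction of Section \ref{s:4}), writes $u_*=\sqrt{-x}+v$, and shows via the ODE \eqref{e:lemma64} that $v$ cannot have a double zero with $v''\le 0$, so the crossing count cannot change as $c$ varies. Your proof instead works at each fixed $c\ge0$ in isolation: the substitution $h=x+u^2$, the identity $h''+ch'-2u^2h=2(u')^2+c>0$ (which I have checked), and the resulting exclusion of nonnegative interior maxima of $h$ give uniqueness directly, with no appeal to Hastings--McLeod, to the large-$c$ analysis, or to smooth dependence on $c$. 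That is a real gain in self-containedness; the paper's route, by contrast, needs less precise information about the behavior of $u-\sqrt{-x}$ at $-\infty$, since it only tracks how transverse crossings can appear or disappear along the homotopy. Both arguments ultimately exploit the same structural fact, namely that a tangency of $u$ with $\sqrt{-x}$ from the relevant side is incompatible with the equation when $c\ge0$; your $\mathcal Mh$ identity is essentially the paper's computation $v''(x_1)=\tfrac14(-x_1)^{-3/2}+\tfrac{c}{2}(-x_1)^{-1/2}>0$ repackaged as a global maximum principle.

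One caution on the step you yourself single out as the crux. You need $u(x)<\sqrt{-x}$, i.e.\ $h<0$, for $x\ll0$. This is indeed true for all $c\ge0$: direct substitution of $u=\sqrt{-x}+\delta$ into the steady-state equation gives the dominant balance $2(-x)\delta=-\tfrac14(-x)^{-3/2}-\tfrac{c}{2}(-x)^{-1/2}$, hence $\delta\sim-\tfrac{c}{4}(-x)^{-3/2}-\tfrac18(-x)^{-5/2}<0$, and this is also what the center-manifold balance $\sqrt2\,\bar u+\tfrac{c}{2\sqrt2}\beta^4=0$ in the proof of Lemma \ref{l:asymc} yields ($\bar u=-\tfrac{c}{4}\beta^4$). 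Be aware, however, that the displayed expansion \eqref{e:asym-cnotzero} as printed carries the correction $+\tfrac{c}{2\sqrt2 x^2}$, which has the opposite sign for $c>0$; this appears to be a transcription slip in the lemma statement, and you cannot simply cite that formula verbatim. Your fallback derivation of the sign from the dominant balance $-2u^2h\sim 2(u')^2+c$ is the safer route, and it becomes rigorous once one knows (from the invariant-manifold analysis) that $h$ admits an algebraic expansion at $-\infty$, so that $h''$ and $ch'$ are genuinely subdominant to $u^2h$.
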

\begin{Proof}
 The result is known for $c=0$, see Theorem 1 in \cite{hastings80}, where it is shown that the Hastings-McLeod solution $u_{\rm HM} = u_*(x;0)$ which is the unique admissible solution for $c=0$ has a unique inflection point $x_0<0$ precisely where it crosses the curve $\sqrt{-x}$. Furthermore, this crossing is transverse, with $\frac{d}{dx}\sqrt{-x}|_{x_0} = \frac{-1}{2\sqrt{-x_0}} < \partial_x u_*(x_0;0) < 0$.

Next, for each $c \gg 1$, inspection of the solution shows that there is exactly one point $x_0$ where $x_0 u+u^3=0$. Furthermore, for these $c$, the admissible solution $u_*(x;c)$ intersects the curve $\sqrt{-x}$ at $x_0$ transversely.

Hence, to prove the lemma, we need to show that the result holds for any finite $c>0$. We do this using a proof by contradiction. We start by observing that, for there to be $c$ value(s) for which the admissible solution has more than one point of intersection, there must be some value of $c$ for which the admissible solution $u_*(x;c)$ has a point of tangency with the curve $\sqrt{-x}$. Hence, we set $u_*(x;c) = \sqrt{-x} + v(x)$ and assume that there exists a $c_1>0$ and a point $x_1<0$ such that, for $u_*(x;c_1)$, one has $v(x_1)=0$ and $v'(x_1)=0$. Next, we calculate derivatives and find
 \begin{equation} \label{e:lemma64}
  0=v''+cv'-x v-v^3-3(-x)v-3(-x)^{1/2}v^2-\frac{1}{4}(-x)^{-3/2} - \frac{c}{2} (-x)^{-1/2}.
 \end{equation}
 Hence, one sees that for $c=c_1$,
 \begin{equation*}
 v''(x_1) = \frac{1}{4(-x_1)^{3/2}} + \frac{c_1}{2\sqrt{-x_1}} > 0.  
 \end{equation*}
From this, it follows that
\begin{equation*}
\partial_{x}^2 u_*(x_1;c_1) = \frac{d^2}{dx^2}\sqrt{-x}\Big\vert _{x_1} + v''(x_1) 
= \frac{c_1}{2\sqrt{-x_1}}>0.
\end{equation*}
However, this is a contradiction, since $\partial_x^2 u_*(x_1;c_1)$ must be non-positive at any such point $x_1$,
{\it i.e.,} $v''(x_1)$ must be less than or equal to zero.
In fact, the above calculation shows that there no value of $x<0$ at which $v$ can have a double root ($v,v'=0$) where $v''\leq 0$. 

Therefore, we have shown that the number of roots is constant unless the sign of the asymptotics of $v$ at $\infty$ changes. However, from the $x \to \infty$ asymptotics of admissible solutions with $c>0$ one sees that this sign remains the same. Hence, there is a unique intersection, and the proof is complete.

\end{Proof}

\section{Results from numerical simulations}\label{s:num}

In this section, we present results of numerical simulations that go beyond the basic phenomena shown in Figure~\ref{f:num}, to illustrate and extend the conclusions of Theorem~\ref{t:1} and Proposition~\ref{p:1}.

Front solutions of the full PDE \eqref{e:actanh} with the hyperbolic tangent ramp function  were computed using AUTO07p \cite{doedel2007auto}, while admissible solutions of the PDE \eqref{e:aclin} with the linear ramp were computed using natural parameter continuation in $\tilde c$ with fourth order finite differences, centered for $\partial_{\tilde x}^2$ and with either up-winding or down-winding for $\partial_{\tilde x}$ depending on whether $\tilde c>0$ or $\tilde c<0$, respectively. In the latter case, the discretization size was $d\tilde x = 0.01$, while the domain-length was $\tilde L = 300.$ 

\begin{figure}[h]
 \centering
% \hspace{-0.7truein}\includegraphics[width=1.1\textwidth]{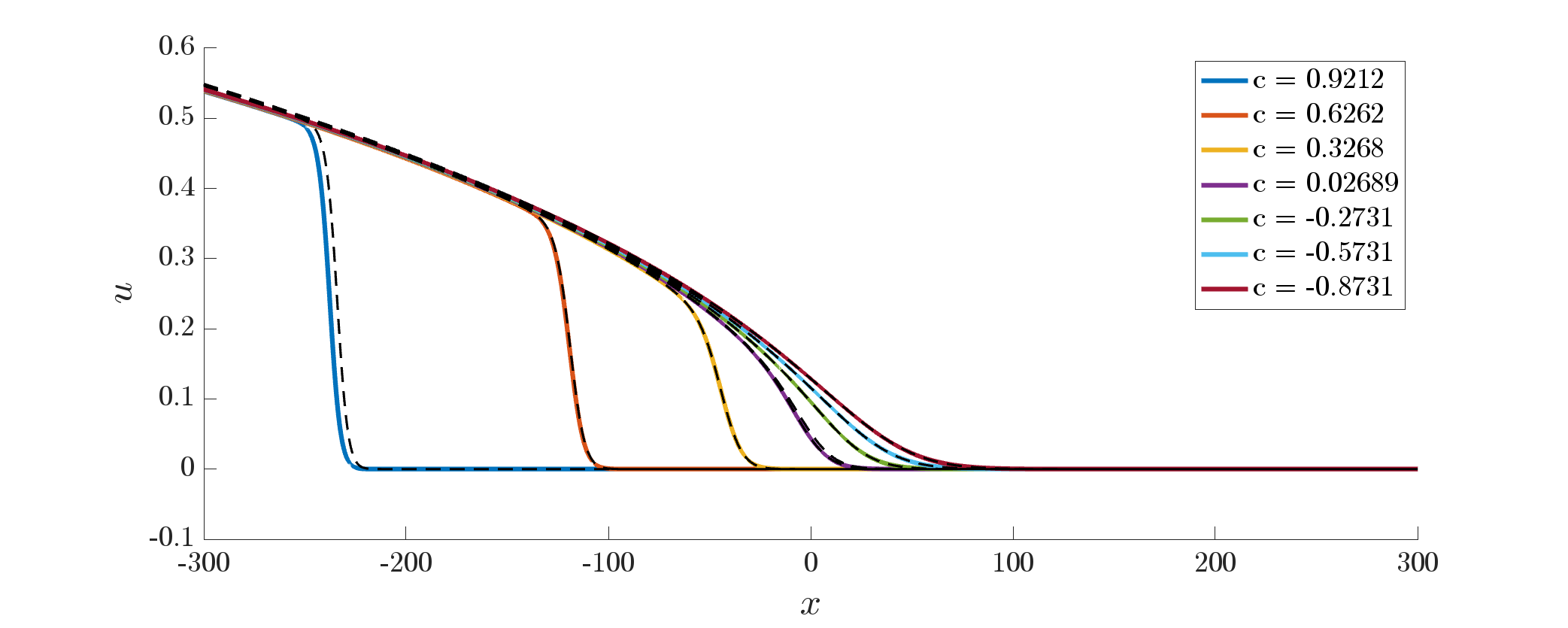}
 \hspace{-0.7in}
 \includegraphics[width=0.73\textwidth]{fig/u_frontprof_comb_x}
 \hspace{-0.5in}
\includegraphics[width=0.35\textwidth]{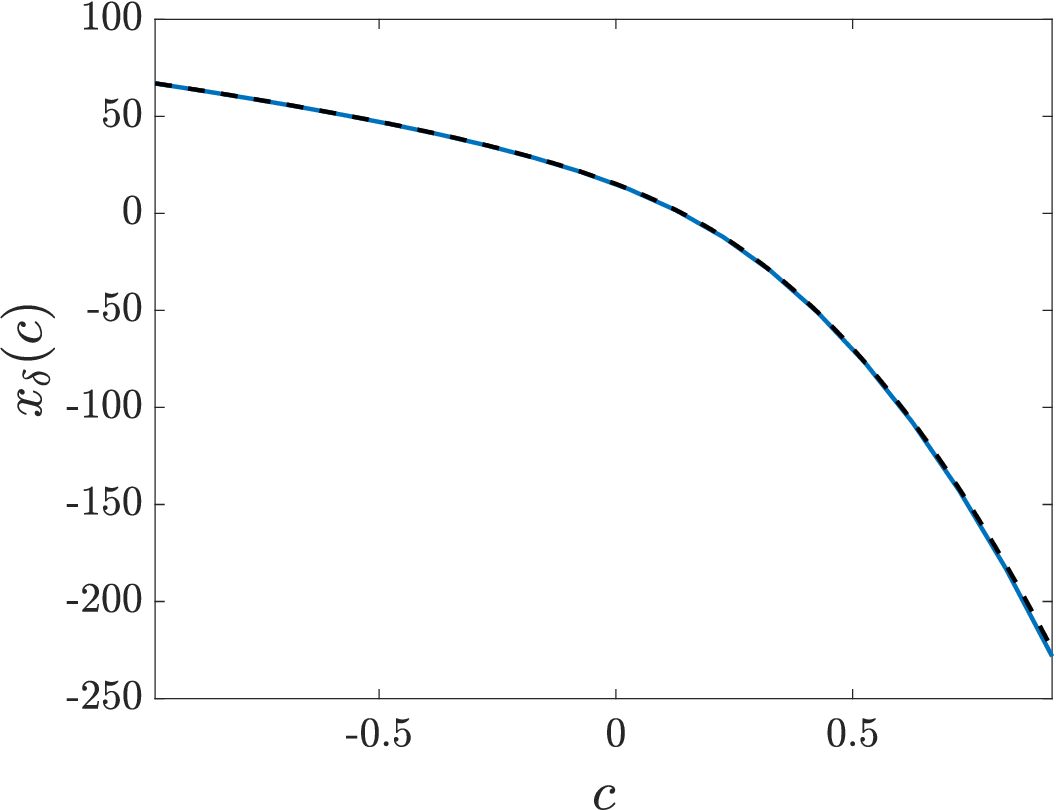}
  \caption{Left: Comparison of front solutions of \eqref{e:actanh}, $u(x;c)$ (solid colored lines) with scaled inner solutions given by $\varepsilon^{1/3}\tilde u(\varepsilon^{-1/3} x)$ (dashed black lines) for a range of un-scaled $c$ values (see legend).
  Right: Comparison of the front interface location $x_\delta(c)$ (blue solid) in \eqref{e:actanh} with the scaled front interface, $\tilde x_\delta(\varepsilon^{-1/3} c)$ (black dashed), for \eqref{e:aclin}. Here, $\delta = 0.1$ and $\varepsilon = 0.001$ throughout.}
  \label{f:tanh}
\end{figure}

The admissible solutions studied in this article accurately describe the ``inner" dynamics of the Allen-Cahn fronts when the ramp is the hyperbolic tangent function used here or a more general, step-like function.
We recall that the variables $u,c, x$ denote the original unscaled variables in \eqref{e:actanh} and that the variables $\tilde u, \tilde c, \tilde x$ represent the scaled variables given in \eqref{e:scale} for the Painlev\'{e}-II with drift equation \eqref{e:aclin}. For $|c| = \mathcal{O}(\varepsilon^{1/3})$, we find that appropriately-scaled admissible solutions of \eqref{e:aclin} provide accurate inner solutions in $|x|\lesssim \varepsilon^{-1/3}$ for traveling-waves in the original slowly-ramped Allen-Cahn equation \eqref{e:actanh} with hyperbolic tangent heterogeneity.  
Equilibrium front solutions are depicted for both  equations in Figure \ref{f:tanh} (left) for a range of $c$-values.  

Next, in Figure \ref{f:tanh} (right), we compare the measured front position $\tilde x_\delta(c)$ for \eqref{e:aclin}, defined by $\tilde u(\tilde x_\delta(\tilde c);\tilde c) = \delta$, for some fixed $\delta>0$ small, with the corresponding front position $x_\delta(c)$ for \eqref{e:actanh}, defined by $u(x_\delta(c);c) = \varepsilon^{1/3} \delta$. We find excellent agreement between $x_\delta(c)$ and  $\varepsilon^{-1/3} \tilde x_\delta ( \varepsilon^{-1/3} c)$. We only find divergence in the two solutions for $\mathcal{O}(1)$ $c$-values where the front interface $x_\delta(c)$ is outside the region where $-\tanh(\varepsilon x) \sim -\varepsilon x$. 

We reiterate that these numerics confirm that the scaled solutions obtained in this work accurately describe the inner solutions of the front solutions $u(x;c)$ in \eqref{e:actanh}.
%Thus, using the inner heteroclinic solutions $\tilde u(\tilde x;\tilde c)$ as organizing orbits on the singular blow-up sphere, we expect a heteroclinic analysis similar to that of \cite[\S 5]{gksv} to give rigorous existence and asymptotics of the front solutions $u(x;c)$ in \eqref{e:actanh}. 

\section{Conclusions}\label{s:concl}

\subsection{Summary of the main results}

In  this article, we studied the PDE \eqref{e:aclin} that arises as a model of spatial slow passage through a pitchfork bifurcation. 
We established the existence, uniqueness, monotonicity, quantitative asymptotics, and qualitative properties of a class of front solutions (which we labeled as ``admissible" solutions)  of \eqref{e:aclin} for all values of the parameter $c \in {\mathbb R}$. (Here, we recall that the tilde has been dropped in Sections 2--6, so that the $c$ in \eqref{e:aclin} is $\tilde{c}$, where the speed $c$ of the quenching front in \eqref{e:actanh} has been scaled as $c = \varepsilon^{1/3}\tilde{c}$.) The admissible solutions connect the state $u=\sqrt{-x}$ as $x \to -\infty$ to the state $u=0$ as $x \to \infty$. 
The spatial decay of these fronts toward the asymptotic states as $x \to \pm \infty$ is given by Lemmas~\ref{l:asyp}-\ref{l:asymc}.
We showed that, for each $c \in {\mathbb R}$, the fronts are monotone decreasing in $x$ (Theorem \ref{t:1}), and that at each point in space, they are monotone decreasing in $c$ (Lemma~\ref{l:m}). In addition, precise asymptotics were given in the limits $c \to -\infty$ (Lemma~\ref{l:cllm1}) and $c \to \infty$ (Lemma~\ref{l:cggp1}).

As $c$ varies over all real numbers, it was shown that the unique monotone decreasing fronts make a transition from exhibiting diffusive spill-over for $c\le 0$ to exhibiting a significant delay in the onset of the instability post the pitchfork bifurcation, and hence that the front is located at a substantial distance away from where the instability first occurs. The asymptotics of the front location ({\it i.e.,} of this delayed onset) for large $c>0$ are given in Property (C) of Proposition~\ref{p:1}.

A variety of methods from classical asymptotic analysis, dynamical systems, invariant manifold theory, spectral theory of Schr\"odinger operators, and functional analysis were used to establish the various lemmas and the main theorem.
We highlight the demonstrations
that the set of parameter values $c$ for which admissible solutions exist is both open (Lemma~\ref{l:ift}) and closed (Lemma~\ref{l:c}). These properties were established by showing that
key operators obtained from linearizing about admissible solutions are bounded invertible (Lemma \ref{p:lc})
and by using other functional analytic techniques. 
Therefore, because there exist unique admissible solutions for each $\vert c\vert \gg 1$ and for $c=0$, it followed that admissible solutions must exist for all $c$.

We also presented numerical simulations of the PDEs to confirm the theory and to illustrate the quantitative asymptotics and properties of the fronts, especially the transition from diffusive spill-over to delayed fronts. See Figures~\ref{f:num} and \ref{f:tanh}. 

The PDE \eqref{e:aclin} arises as the inner problem for the PDE \eqref{e:actanh} and for other PDEs modeling quenching with slow spatial ramps. It zooms in on the critical parameter regime $c=\mathcal{O}(\varepsilon^{1/3})$ in \eqref{e:actanh} about the pitchfork bifurcation. In this respect, the analysis in this article complements our recent work \cite{gksv}.
There, the diffusive spill-over of fronts was established for $c=0$ in \eqref{e:actanh} (see Theorem 1.1 in \cite{gksv}),
and the delayed onset of fronts was proven for $\mathcal{O}(1)$ values of $c>0$ (see Theorem 1.2 in \cite{gksv}). The scaling analyzed here $(c=\varepsilon^{1/3} \tilde{c}$) is precisely that in the critical transition regime between these two earlier cases. Hence, it is useful for understanding the transition in this and related PDEs. Moreover, we have analyzed system 
\eqref{e:aclin} for all $c \in {\mathbb R}$, so that we not only cover fully the inner domain, {\it i.e.,} we link up the case $c=0$ in \eqref{e:actanh} in the limit as $\tilde{c} \to 0^+$\ to the case $c>0$ and $\mathcal{O}(1)$ in the limit as $\tilde{c} \to \infty$, but we also include the regime $c<0$ in \eqref{e:aclin} to present a full dynamical systems  and unfolding analysis as $c$ passes through zero. 

Besides being of interest for quenching problems with slow spatial ramps, the admissible solutions studied here are also of interest for extending the Hastings-McLeod solution of the Painlev\'e-II equation (which is the unique admissible solution for $c=0$) to Painlev\'e-II equations with a drift term, which arise naturally here as the equation for stationary solutions of \eqref{e:aclin}.

Finally, our results may be thought of as providing a PDE analogue to the temporal passage through a pitchfork bifurcation in ODEs, where long, $\caO(1)$ delays are expected when passing from a subcritical to a supercritical parameter regime, but only small delays occur in the reverse transition, recall \cite{Krupa_2001,maree96}.

\subsection{Discussion and future directions}

In this section, we discuss a number of avenues for future investigation.

Our methods contain a blend of geometric tools to describe asymptotics at spatial infinity, and a more traditional functional analytic continuation argument based on the absence of saddle-nodes and a priori estimates. It would be interesting to understand the existence problem as a shooting problem in $\R^3$. Alternatively, it seems plausible that a more direct argument based on a Leray-Schauder degree would give existence for any fixed $c$. More PDE oriented methods based on sub- and super solutions may also give additional insight into the shape of solutions, and possibly generalize well to more geometric questions in  higher space-dimensions.

Beyond the monotone ramps studied in this article, it would be interesting to study non-monotone ramps and the possibility of creating non-monotone structures in the wake. We emphasize that monotonicity enters crucially at several points in our proofs, in particular when ruling out saddle-node bifurcations through establishing negativity of the linearization. 

The pitchfork bifurcation studied in this article is just one of many prototypical examples of systems with spatially varying ramps that have small gradients. One would clearly wish to have similarly detailed descriptions of slow spatial passages through other elementary bifurcations, such as saddle-node, transcritical, subcritical pitchfork, and Hopf bifurcations. Some of the relevant partial work is summarized in \cite{gksv}. Other interesting examples include pattern-forming bifurcations, such as, for instance,  a Swift-Hohenberg equation with a spatial ramp passing through a Turing, Eckhaus, or zigzag instability. It would also be of interest how pattern-forming phenomena arising from slow spatial ramps compare with those explored in temporally dynamic bifurcations; see for example \cite{asch2023slow,jelbart2023formal,tsubota2023bifurcation}.%\cite{ Avitabile_2020, engel2022geometric,goh2022delayed ,jelbart2023formal,asch2023slow,tsubota2023bifurcation}.

The results here can be thought of as describing ramps in multiple space dimensions, when the ramp itself simply does not depend on a second or third spatial direction. Fronts found here would then also be stable against perturbations in this transverse direction. On the other hand, one could first ask if more complex states can arise in the wake of such a one-dimensional parameter ramp when observed in higher ambient space dimensions. For step-like parameter ramps, rather than slow ramps, these questions were studied in \cite{monteiro1,monteiro3,monteiro2}, uncovering some peculiar constraints. Understanding more generally the impact of the geometry of the quenching ramp on the possibility of complex patterns in the wake of the quenching process appears to be a wide open question.

  \section*{Acknowledgments}
  The authors were partially supported by the National Science Foundation through grants NSF-DMS-2006887, DMS-2307650 (RG), NSF-DMS-1616064 (TK), and NSF DMS-1907391 and DMS-2205663 (AS). TK and AS would like to thank the Banff International Research Station for its hospitality during the workshop ``Topics in Multiple Time Scale Dynamics" in November 28 -- December 2, 2022, as well as the organizers Maximilian Engel, Hildeberto Jardon-Kojakhmetov, Bj\"orn Sandstede, and Cinzia Soresina.

%%%%%%%%%%%%%%%%%%%%%%%%%%%%%%%%%%%%%%%%%%%%%%%%%%%%%%
% Literature
%%%%%%%%%%%%%%%%%%%%%%%%%%%%%%%%%%%%%%%%%%%%%%%%%%%%%%
\bibliographystyle{abbrv}
\bibliography{p2}

\end{document}